\documentclass[reqno,11pt]{amsart}        
\usepackage{amsfonts,amsmath,amssymb,amsthm,colordvi,mathrsfs,graphicx}
\usepackage{caption,subcaption,float}
\usepackage[utf8]{inputenc}
\usepackage{mathrsfs}
\usepackage{geometry}
\usepackage{enumerate}

\usepackage{amsmath, amssymb, amsthm, geometry, hyperref}

\usepackage{tabularx}
\usepackage{tabulary}
 
\hypersetup{
  colorlinks=true,
  linkcolor=blue,
  citecolor=blue,
  urlcolor=blue
}
\usepackage[all,knot,poly]{xy}
\usepackage{tikz-cd}
\usepackage{tikz}
\usepackage{verbatim}
\usetikzlibrary{arrows}
\usetikzlibrary{knots}
\tikzset{every path/.style={line width=0.4pt},every node/.style={transform shape,knot crossing,inner sep=1.5pt},>=triangle 60,text node/.style={rectangle,transform shape=false,black}}
 \usetikzlibrary{decorations.markings, arrows.meta}

\theoremstyle{plain}      
\newtheorem{thm}{Theorem}[section]     
\newtheorem{theorem}[thm]{\bf Theorem}

\newtheorem{proposition}[thm]{\bf Proposition}

\theoremstyle{remark}

\theoremstyle{definition}      
     
\subjclass[2020]{14B05, 14B10, 32G20, 14B07}
\keywords{Equisingular deformations, isolated singularities, semiregularity, logarithmic normal bundles, Torelli, cuspidal  curves.}
\begin{document}

%\title[Semiregularity and Maximal Nodal Deformations of Curves]{Semiregularity and Maximal Nodal Deformations of Curves in Linear Systems}

\author{Mounir Nisse}
 
\address{Mounir Nisse\\
Department of Mathematics, Xiamen University Malaysia, Jalan Sunsuria, Bandar Sunsuria, 43900, Sepang, Selangor, Malaysia.
}
\email{mounir.nisse@gmail.com, mounir.nisse@xmu.edu.my}
\thanks{}
\thanks{This research is supported in part by Xiamen University Malaysia Research Fund (Grant no. XMUMRF/ 2020-C5/IMAT/0013).}
%\maketitle

%\tableofcontents

%\title{Ideas Toward Deformations of Curves to Nodal Curves}

%\title{Maximal Nodal Deformations of Curves on Degenerating Surfaces}%{Deforming Curves to Maximal Nodal Configurations} %Equigeneric Deformations and Nodal Curves on Surfaces
\title{Equisingular Deformations of Curves and Surfaces in Threefolds}

\maketitle
 
\begin{abstract}
We study equisingular deformation problems for curves and surfaces in algebraic
families, with particular emphasis on situations where nodal behavior is no longer
generic. Extending classical Severi theory, we develop deformation--theoretic criteria
ensuring the existence of deformations with isolated singularities of minimal type,
including cusps on curves and ordinary double points on curves and surfaces in
threefolds. Under unobstructedness and surjectivity assumptions for natural
global--to--local maps of normal bundles, we prove maximality results showing that the
number of such singularities is governed by the global realizability of equisingular
deformation directions rather than by numerical invariants alone. Logarithmic
semiregularity allows these results to persist in degenerations with normal crossings
special fibers. We further explain how these singularities arise as boundary phenomena
of equigeneric Severi strata and outline applications to refined Severi counts via
logarithmic and tropical methods.
\end{abstract}

\section{Introduction}

The deformation theory of singular subvarieties in algebraic families is a central
topic in algebraic geometry, with deep connections to Severi theory, enumerative
geometry, and degeneration techniques. Classical Severi theory studies nodal curves on
smooth surfaces and explains why, under equigeneric deformations, nodes appear as the
generic singularities and their number is governed purely by the arithmetic genus.
More recently, this philosophy has been extended to degenerations of surfaces via
logarithmic geometry, leading to conceptual proofs of generic nodality and maximality
results in highly singular settings.

The present work develops a systematic generalization of this picture in two new
directions. First, it replaces nodal curves on surfaces by more rigid singularities,
such as cuspidal curves and nodal curves lying on threefolds. Second, it replaces
equigeneric deformation theory by equisingular deformation theory as the correct
framework whenever nodal behavior is no longer generic. These two shifts reveal a
common underlying principle: once singularities cease to be generic, maximality is no
longer controlled by numerical invariants alone, but by the global realizability of
local equisingular deformation directions.

A central contribution of this work is the formulation and proof of equisingular
deformation theorems for curves and surfaces inside higher--dimensional ambient spaces.
For curves on surfaces, cusps replace nodes as the simplest non--generic singularities.
We show that cuspidal curves occur as boundary points of equigeneric Severi strata and
prove precise criteria ensuring the existence of deformations preserving cusps.
Maximality of cuspidal behavior is shown to be governed by the surjectivity of the
global--to--local map from sections of the normal bundle to equisingular deformation
spaces, rather than by the $\delta$--invariant alone.

The main new results concern curves and surfaces inside threefolds. For curves in
threefolds, nodal singularities are no longer generic: a general deformation smooths
all singularities, and nodes appear only under equisingular constraints. We establish
a nodal deformation theorem for curves in threefolds showing that, under unobstructed
embedded deformation theory and surjectivity of the normal bundle restriction map, one
can realize the maximal number of nodes compatible with global deformation theory.
This places nodal curves in threefolds in the same conceptual class as cuspidal curves
on surfaces.

The most substantial new contribution is the extension of this philosophy to surfaces
inside threefolds. We prove a nodal surface deformation theorem asserting that, under
unobstructedness and surjectivity assumptions, a surface in the special fiber of a
threefold degeneration admits deformations whose general members have only ordinary
double points, with the maximal possible number of such singularities. This result
provides a higher--dimensional analogue of classical Severi theory and clarifies the
role of ordinary double points as the simplest isolated surface singularities arising
in deformation problems.

Throughout the paper, logarithmic geometry plays a crucial role in treating
degenerations. By replacing the normal bundle with the logarithmic normal bundle and
unobstructedness with logarithmic semiregularity, we obtain deformation theorems that
remain valid in families with normal crossings special fibers. This allows us to
interpret nodal and cuspidal loci as genuine boundary strata of equigeneric deformation
spaces and ensures that maximality statements persist under degeneration.

Finally, we connect these deformation--theoretic results with enumerative geometry.
Cuspidal and nodal conditions are interpreted as refined Severi conditions, modifying
classical Severi degrees. Logarithmic and tropical techniques provide effective tools
for computing these refined invariants, with cusps and higher--dimensional nodes
appearing as limits of colliding simpler singularities.

Taken together, the results of this work provide a unified deformation--theoretic
framework for understanding nodal and cuspidal phenomena across dimensions. They
clarify when singularities are generic, when they are rigid, and how maximal singular
behavior is governed by global geometry rather than purely numerical constraints.

%%%%%%%%%%%%%%%%%%%%%%%%%%%%%%%%%%%%%%%%%%%%%%%%%%%%%%%%%%%%%%%%%%%%

\section{Preliminaries}

We collect in this section the basic notions and results on deformation theory,
singularities, and logarithmic geometry that will be used throughout this work. The
purpose is not to give exhaustive proofs, but to fix notation and recall the precise
form of the deformation--theoretic tools underlying our results.

Let $k$ be an algebraically closed field of characteristic zero. All schemes and
varieties are assumed to be separated and of finite type over $k$. A family over a
disk will mean a flat morphism $\pi:\mathcal X\to\Delta$, where $\Delta$ is the
spectrum of a discrete valuation ring or a small analytic disk.

\subsection*{Local Complete Intersections and Normal Bundles}

Let $Y$ be a smooth variety and let $Z\subset Y$ be a closed subscheme. The subscheme
$Z$ is called a local complete intersection if, locally on $Y$, its ideal sheaf is
generated by a regular sequence. In this case the conormal sheaf
\[
\mathcal I_Z/\mathcal I_Z^2
\]
is locally free, and its dual
\[
N_{Z/Y}=\mathcal Hom(\mathcal I_Z/\mathcal I_Z^2,\mathcal O_Z)
\]
is called the normal bundle of $Z$ in $Y$.

Infinitesimal embedded deformations of $Z$ inside $Y$ are parametrized by
$H^0(Z,N_{Z/Y})$, while obstructions lie in $H^1(Z,N_{Z/Y})$. When
$H^1(Z,N_{Z/Y})=0$, all infinitesimal deformations integrate to actual deformations,
and $Z$ is said to be unobstructed as an embedded subvariety of $Y$.

\subsection*{Singularities and Local Deformation Spaces}

Let $(Z,p)$ be an isolated singularity of a reduced local complete intersection
subscheme. Its local deformation space is governed by the finite--dimensional vector
space
\[
T^1_{Z,p}=\mathrm{Ext}^1(L_{Z,p},\mathcal O_{Z,p}),
\]
where $L_{Z,p}$ denotes the cotangent complex. Elements of $T^1_{Z,p}$ parametrize
first--order deformations of the germ $(Z,p)$.

For planar curve singularities, the $\delta$--invariant measures the discrepancy
between the arithmetic and geometric genera and is upper semicontinuous in flat
families. Ordinary nodes and cusps are characterized by their $\delta$--invariants and
by the dimensions of their local deformation spaces. An ordinary node has
$\delta=1$ and $\dim T^1=1$, while an ordinary cusp has $\delta=1$ and $\dim T^1=2$.
The equisingular deformation space $T^{1,\mathrm{es}}_{Z,p}$ is the linear subspace of
$T^1_{Z,p}$ consisting of deformations preserving the analytic type of the singularity.

For surfaces, the simplest isolated singularity is the ordinary double point, locally
given by $x^2+y^2+z^2=0$. Its local deformation space is one--dimensional and has no
moduli, so every nonzero deformation direction preserves the analytic type.

\subsection*{Equigeneric and Equisingular Deformations}

A deformation of a curve is called equigeneric if it preserves the arithmetic genus,
or equivalently the total $\delta$--invariant. Equigeneric deformations form a closed
subset of the base of the semiuniversal deformation and are generically nodal for
curves on surfaces. In contrast, equisingular deformations preserve the analytic type
of each singularity and typically form a proper closed subset of the equigeneric
locus.

For curves on surfaces, nodes are generic in equigeneric families, while cusps occur
as boundary points corresponding to collisions of nodes. For curves in threefolds and
surfaces in threefolds, nodal singularities are no longer generic, and equisingular
deformation theory provides the correct framework.

\subsection*{Logarithmic Geometry}

Let $\pi:\mathcal X\to\Delta$ be a flat family with normal crossings special fiber
$\mathcal X_0$. Endowing $\mathcal X$ with the divisorial logarithmic structure induced
by $\mathcal X_0$ and $\Delta$ with the standard logarithmic point, the morphism $\pi$
becomes logarithmically smooth.

If $Z\subset\mathcal X_0$ is a local complete intersection meeting the singular locus
transversely, its logarithmic embedded deformation theory is governed by the
logarithmic normal bundle $N^{\log}_{Z/\mathcal X_0}$. Infinitesimal logarithmic
deformations are parametrized by $H^0(Z,N^{\log}_{Z/\mathcal X_0})$, with obstructions
in $H^1(Z,N^{\log}_{Z/\mathcal X_0})$.

The logarithmic normal bundle fits into a canonical exact sequence
\[
0\longrightarrow N^{\log}_{Z/\mathcal X_0}
\longrightarrow N^{\log}_{Z/\mathcal X}
\longrightarrow \mathcal O_Z
\longrightarrow 0,
\]
whose extension class is the restriction of the logarithmic Kodaira--Spencer class.
Logarithmic semiregularity means that the induced map
\[
H^1(Z,N^{\log}_{Z/\mathcal X_0})\longrightarrow
H^2(\mathcal X_0,\mathcal O_{\mathcal X_0})
\]
is injective. Under this assumption, logarithmic obstructions vanish and all
infinitesimal logarithmic deformations lift to the total space.

\subsection*{Severi Strata and Boundary Phenomena}

Given a linear system $|L|$ on a smooth surface, Severi varieties parametrize curves
with prescribed singularities. Equigeneric Severi strata are dominated by nodal curves,
while loci of curves with higher singularities such as cusps form boundary strata of
higher codimension. This philosophy extends to higher dimensions, where nodal curves
and nodal surfaces appear as rigid boundary points in deformation spaces.

These preliminaries provide the deformation--theoretic framework for the results
proved in the subsequent sections.
 
 %%%%%%%%%%%%%%%%%%%%%%%%%%%%%%%%%%%%%%%%%%%%%%%%%%%%%%%%%%%%%%%%%%%%%
%%%%%%%%%%%%%%%%%%%%%%%%%%%%%%%%%%%%%%%%%%%%%%%%%%%%%%%%%%%%%%%%%%%%%

 \section{Cuspidal deformations in a linear system}

Replacing nodes by cusps fundamentally changes both the deformation problem and the
meaning of ``maximality'', because cusps are not the simplest singularities compatible
with genus loss. The reformulation therefore requires additional structure.

Let $\pi:\mathcal X\to\Delta$ be a flat family over a disk, with special fiber
$\mathcal X_0$ and smooth general fiber $\mathcal X_t$ for $t\neq0$, and let
$C\subset\mathcal X_0$ be a reduced curve. We now ask under which conditions $C$ can
be deformed to a curve $C_t\subset\mathcal X_t$ whose singularities are ordinary
cusps, and how many cusps such a deformation can have.

An ordinary cusp is a planar singularity analytically equivalent to $y^2=x^3$. Its
$\delta$--invariant equals $1$, just as for a node, but its local deformation space
$T^1$ has dimension $2$ rather than $1$. Consequently, cusps impose strictly more
conditions on deformations than nodes, even though they contribute the same amount
to the genus drop. This difference has two essential consequences.

First, the numerical invariant controlling the problem is still the total
$\delta$--invariant
\[
\delta(C)=\sum_{p\in\mathrm{Sing}(C)}\delta_p,
\]
since the arithmetic genus satisfies
\[
p_a(C)=g(\widetilde C)+\delta(C)
\]
and is invariant in flat families. Any deformation $C_t$ whose singularities are all
cusps must therefore satisfy
\[
\#\{\text{cusps of }C_t\}\le \delta(C).
\]
However, unlike the nodal case, this bound is rarely sharp in general linear systems,
because cusps are codimension--two phenomena, while nodes are codimension--one.

Second, the deformation problem is no longer equigeneric but equisingular in nature.
For nodes, generic equigeneric deformations automatically produce nodal curves.
For cusps, generic equigeneric deformations break singularities into nodes rather
than preserving cuspidality. Producing cusps requires imposing additional conditions
that force the deformation to remain inside the equisingular locus of $A_2$
singularities.

Given a reduced curve $C\subset\mathcal X_0$, one seeks conditions under which there
exists a deformation $C_t\subset\mathcal X_t$ whose singularities are ordinary cusps,
and such that the number of cusps is maximal among all cuspidal deformations of $C$
inside the family. Maximality is no longer governed solely by the arithmetic genus,
but by the dimension of the space of equisingular deformations preserving $A_2$
singularities.

In deformation--theoretic terms, this requires control of the natural map
\[
H^0(C,N_{C/\mathcal X_0})\longrightarrow
\bigoplus_{p\in\mathrm{Sing}(C)} T^{1,\mathrm{es}}_{C,p},
\]
where $T^{1,\mathrm{es}}_{C,p}\subset T^1_{C,p}$ denotes the equisingular subspace
parametrizing deformations that preserve the cusp at $p$. Since
$\dim T^{1,\mathrm{es}}_{C,p}=1$ for an ordinary cusp, each cusp consumes one unit of
$\delta$ but two local deformation directions, making cuspidal curves far more rigid
than nodal ones.

In summary, replacing nodes by cusps changes the guiding principle of the problem.
For nodes, generic equigeneric deformations automatically produce curves with the
maximal number of nodes allowed by the arithmetic genus. For cusps, one must instead
study equisingular deformation spaces, and the maximal number of cusps is typically
strictly smaller than $\delta(C)$, determined by global deformation constraints rather
than by numerical invariants alone.

%%%%%%%%%%%%%%%%%%%%%%%%%%%%%%%%%%%%%%%%%%%%%%%%%%%%%%%%%%%%%
%%%%%%%%%%%%%%%%%%%%%%%%%%%%%%%%%%%%%%%%%%%%%%%%%%%%%%%%%%%%%
 \bigskip
 
In the cuspidal setting one cannot expect an analogue of the nodal theorem formulated
purely in equigeneric terms. The correct replacement is an \emph{equisingular}
statement, reflecting the fact that cusps are unstable under generic equigeneric
deformation and require additional conditions to persist.

Let $\pi:\mathcal X\to\Delta$ be a flat family of projective surfaces with smooth
general fiber $\mathcal X_t$ and special fiber $\mathcal X_0$. Let
$C\subset\mathcal X_0$ be a reduced curve whose singularities are planar and
smoothable. Denote by $\delta_p$ the local $\delta$--invariant at $p$ and assume that
all singularities of interest are ordinary cusps.

\begin{theorem}[Cuspidal deformations in a linear system]
Let $\pi:\mathcal X\to\Delta$ be as above and let $C\subset\mathcal X_0$ be a reduced
curve whose singularities are ordinary cusps.
Assume that embedded deformations of $C$ in $\mathcal X_0$ are unobstructed and that
the natural map
\[
H^0(C,N_{C/\mathcal X_0})\longrightarrow
\bigoplus_{p\in\mathrm{Sing}(C)} T^{1,\mathrm{es}}_{C,p}
\]
is surjective, where $T^{1,\mathrm{es}}_{C,p}$ denotes the equisingular deformation
space of the cusp at $p$.
Then there exists a deformation $C_t\subset\mathcal X_t$ for $t\neq0$ such that $C_t$
has only ordinary cusps, and the number of cusps equals the dimension of the image of
the above map. Moreover, this number is maximal among all cuspidal deformations of $C$
inside the family.
\end{theorem}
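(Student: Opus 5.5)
The plan is a standard two-stage argument. First, use unobstructedness to produce an honest smooth family of deformations of $C$ in $\mathcal X_0$. Then use the surjectivity hypothesis to cut out, inside that family, the locus where each cusp persists. Finally, transport the result to $\mathcal X_t$ and compare dimensions.

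\textbf{Step 1 (local-to-global structure of the embedded Hilbert scheme).} Since $H^1(C,N_{C/\mathcal X_0})=0$, the embedded deformation functor of $C$ in $\mathcal X_0$ is smooth. Its base $H$ is a smooth germ with tangent space $H^0(C,N_{C/\mathcal X_0})$. For each $p\in\mathrm{Sing}(C)$ the semiuniversal deformation of the cusp $(C,p)$ has a smooth two-dimensional base $B_p$ with tangent space $T^1_{C,p}$. Inside $B_p$, the equisingular locus $ES_p\subset B_p$ is a smooth curve germ with tangent line $T^{1,\mathrm{es}}_{C,p}$; this is the classical fact that the $\mu$-constant stratum of $A_2$ is smooth. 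Versality gives a morphism of germs
\[
\Phi:H\to\prod_p B_p .
\]
Its differential is the restriction map in the statement composed with the inclusion into $\bigoplus_p T^1_{C,p}$.

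\textbf{Step 2 (the equisingular stratum).} Define the equisingular locus $H^{\mathrm{es}}=\Phi^{-1}\bigl(\prod_p ES_p\bigr)$. To show that $H^{\mathrm{es}}$ is smooth of the expected dimension, I will prove that $\Phi$ is transverse to $\prod_p ES_p$. Concretely, this amounts to showing that $H^0(C,N_{C/\mathcal X_0})$ surjects onto $\bigoplus_p T^1_{C,p}/T^{1,\mathrm{es}}_{C,p}$. This is the \emph{main obstacle}. The hypothesis literally gives surjectivity onto $\bigoplus_p T^{1,\mathrm{es}}_{C,p}$, and that is only meaningful if the map is read through the splitting $T^1_{C,p}\cong T^{1,\mathrm{es}}_{C,p}\oplus(\text{transversal line})$.

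My first attempt is to interpret the hypothesis as surjectivity onto the full local space $\bigoplus_p T^1_{C,p}$ restricted to the equisingular components. The justification would be the standard $H^1(J_{C}^{es}\otimes N)=0$-type criterion, where $J^{es}$ is the equisingular ideal. If that interpretation fails, I will restrict to the subset $S\subset\mathrm{Sing}(C)$ of cusps for which transversality holds; this gives the count "dimension of the image." With transversality in hand, $H^{\mathrm{es}}$ is smooth. At a general point $[C']\in H^{\mathrm{es}}$, the curve $C'$ has, at each $p$, an analytic germ in $ES_p$, hence an ordinary cusp. Away from the points $p$, the curve stays smooth by openness.

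\textbf{Step 3 (passing to $\mathcal X_t$ and maximality).} I will run the same construction relative to $\Delta$, using the relative Hilbert scheme of $\mathcal X/\Delta$. The unobstructedness of $C$ in $\mathcal X_0$ together with flatness gives smoothness of the relative Hilbert scheme over $\Delta$, possibly after base change $t\mapsto t^m$. The equisingular locus relative to $\Delta$ then dominates $\Delta$, which produces $C_t\subset\mathcal X_t$ with only ordinary cusps. The number of cusps is the number of local factors preserved. Because each cusp has $\dim T^{1,\mathrm{es}}=1$, this number equals the dimension of the image of the map.

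For maximality, let $C'_t$ be any cuspidal deformation of $C$ with $k$ cusps. By upper semicontinuity, each cusp specializes to a singular point of $C$, and an $A_2$ point can only specialize to the cusp itself. Its tangent direction at $t=0$ lies in the corresponding $T^{1,\mathrm{es}}_{C,p}$ factor, and these directions for distinct cusps are independent. This forces $k$ to be at most the dimension of the image, which proves maximality.
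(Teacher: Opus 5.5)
There is a genuine gap, and it is the one you flagged yourself: Step 2 cannot be completed from the hypothesis as stated. Transversality of $\Phi$ to $\prod_p ES_p$ is a condition on the \emph{normal} directions $\bigoplus_p T^1_{C,p}/T^{1,\mathrm{es}}_{C,p}$. The theorem only grants surjectivity onto the \emph{tangent} directions $\bigoplus_p T^{1,\mathrm{es}}_{C,p}$, so ``reinterpreting'' the hypothesis amounts to proving a different theorem.

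Moreover, the local model you take over from the statement is not the standard one. The cusp is a simple singularity. Its semiuniversal deformation is $y^2=x^3+ax+b$, with $T^1_{C,p}=\mathbb C[x,y]/(y,x^2)$, and away from the origin the discriminant $4a^3+27b^2=0$ parametrizes one-nodal curves. Hence $ES_p=\{0\}$, not a smooth curve germ, and each cusp imposes two conditions (as the paper itself uses in its examples section). Write $\kappa$ for the number of cusps of $C$. The transversality you need is then surjectivity of $H^0(C,N_{C/\mathcal X_0})\to\bigoplus_p T^1_{C,p}$, i.e.\ independence of the $2\kappa$ conditions imposed by the length-two Tjurina schemes at the cusps.

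Read with one-dimensional $T^{1,\mathrm{es}}_{C,p}$, the stated hypothesis only forces $h^0(N_{C/\mathcal X_0})\ge\kappa$. It therefore allows $h^0(N_{C/\mathcal X_0})<2\kappa$. In that range $\Phi^{-1}(0)$ in the relative Hilbert scheme has expected dimension $h^0(N_{C/\mathcal X_0})+1-2\kappa\le 0$, and nothing forces it to dominate $\Delta$. Your fallback does not repair this. Under surjectivity the image has dimension exactly $\kappa$, so keeping only a subset $S$ of cusps yields $|S|<\kappa$ cusps, contradicting the asserted count. You would also have to show that the discarded cusps smooth out rather than becoming nodes.

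The paper's own proof does not close this gap either. After lifting $C$ to $\mathcal X$ via $0\to N_{C/\mathcal X_0}\to N_{C/\mathcal X}\to\mathcal O_C\to 0$, it simply asserts that the transverse one-parameter slice stays equisingular at the cusps, which is precisely the transversality you isolated.

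If the hypothesis is strengthened to surjectivity onto $\bigoplus_p T^1_{C,p}$, your Steps 1--3 do work with $ES_p=\{0\}$. The relative Hilbert scheme $\mathcal H$ is smooth over $\Delta$ because $H^1(C,N_{C/\mathcal X_0})=0$. The map $\mathcal H\to\prod_p B_p\times\Delta$ given by $\Phi$ and the projection to $\Delta$ is then a submersion at $[C]$. So $\Phi^{-1}(0)$ is smooth over $\Delta$, and its members over $t\neq 0$ have exactly $\kappa$ cusps.

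For maximality, the tangent-direction argument is beside the point: for a cusp-preserving family those local directions are zero. What you need is that distinct singular points of $C_t$ specialize to distinct cusps of $C$. This follows from semicontinuity of $\delta$, because $\delta_p=1$ at each cusp and smooth points of $C$ stay smooth. Hence $C_t$ has at most $\kappa=\dim\mathrm{Im}$ cusps.
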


This theorem differs conceptually from the nodal case in several essential ways.
First, the maximal number of cusps is no longer governed by the arithmetic genus or
the total $\delta$--invariant alone. Although a cusp has $\delta=1$, just like a node,
its local deformation space has dimension $2$, and only a one--dimensional subspace
corresponds to equisingular deformations. As a result, cusps impose more global
conditions than nodes, and maximality becomes a question of dimension rather than
purely of numerical invariants.

Second, the deformation locus controlling the theorem is equisingular rather than
equigeneric. In the nodal case, equigeneric deformations are generically nodal, so
nodal curves appear automatically as generic points. In contrast, cusps lie on the
boundary of equigeneric strata and are destroyed by generic perturbations. Preserving
cusps therefore requires restricting to the equisingular locus, which is a proper
closed subset of codimension one inside the equigeneric deformation space.

Finally, the conclusion reflects the rigidity of cuspidal curves. While nodal curves
fill dense open subsets of equigeneric deformation spaces and Severi varieties, curves
with cusps form special loci of higher codimension. The theorem asserts existence and
maximality only relative to the equisingular deformation space, not among all
equigeneric deformations.

In this sense, the analogue of the nodal theorem for cusps replaces the philosophy
``generic equigeneric implies nodal'' with the statement ``equisingular and
unobstructed implies maximal cuspidality''. Nodes are governed by genericity, whereas
cusps are governed by rigidity and transversality conditions.

%%%%%%%%%%%%%%%%%%%%%%%%%%%%%%%%%%%%%%%%%%%%%%%%%%%%%%%%%%%%%
%%%%%%%%%%%%%%%%%%%%%%%%%%%%%%%%%%%%%%%%%%%%%%%%%%%%%%%%%%%%%
 \bigskip

\begin{theorem}[Cuspidal deformations in a linear system]
Let $\pi:\mathcal X\to\Delta$ be a flat family of projective surfaces with smooth
general fiber $\mathcal X_t$ and special fiber $\mathcal X_0$.
Let $C\subset\mathcal X_0$ be a reduced curve whose singularities are ordinary cusps.
Assume that embedded deformations of $C$ in $\mathcal X_0$ are unobstructed and that
the natural map
\[
H^0(C,N_{C/\mathcal X_0})\longrightarrow
\bigoplus_{p\in\mathrm{Sing}(C)} T^{1,\mathrm{es}}_{C,p}
\]
is surjective, where $T^{1,\mathrm{es}}_{C,p}$ denotes the equisingular deformation
space of the cusp at $p$.
Then there exists a deformation $C_t\subset\mathcal X_t$ for $t\neq0$ such that $C_t$
has only ordinary cusps, and the number of cusps equals the dimension of the image of
the above map. Moreover, this number is maximal among all cuspidal deformations of $C$
inside the family.
\end{theorem}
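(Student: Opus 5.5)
The plan is to build the cuspidal family as the general point of an equisingular locus in the embedded deformation space of $C$. That locus is smooth of the expected codimension because of the surjectivity hypothesis. We then transport this locus to the general fiber $\mathcal X_t$.

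\emph{Step 1: the local-to-global setup.} Since $H^1(C,N_{C/\mathcal X_0})=0$, the embedded Hilbert scheme $H$ of $C$ in $\mathcal X_0$ is smooth at $[C]$ with tangent space $H^0(C,N_{C/\mathcal X_0})$. For each cusp $p$, versality of the semiuniversal deformation of the germ $(C,p)$ gives a morphism of germs from $H$ to the base $B_p$, with $B_p$ smooth and $T_0B_p=T^1_{C,p}$. Inside $B_p$, the equisingular stratum $B_p^{\mathrm{es}}$ of $A_2$ singularities is a smooth subgerm. We adopt the convention of the statement, namely the paper's $T^{1,\mathrm{es}}_{C,p}$. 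The plan treats $T^{1,\mathrm{es}}_{C,p}$ as the normal directions to $B^{\mathrm{es}}_p$ that the global sections must control: surjectivity means the composite $H\to\prod_p B_p\to\prod_p B_p/B_p^{\mathrm{es}}$ is a submersion at $[C]$. Applying the implicit function theorem, the preimage $H^{\mathrm{es}}$ of $\prod_p B_p^{\mathrm{es}}$ is smooth at $[C]$. Its codimension equals $\dim$ of the image of the map in the statement, and the surjectivity hypothesis lets each cusp be preserved or smoothed independently.

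\emph{Step 2: passage to $\mathcal X_t$.} We then consider the relative Hilbert scheme $\mathcal H\to\Delta$ of $\mathcal X/\Delta$. Unobstructedness of $C$ in $\mathcal X_0$ implies that $\mathcal H$ is smooth over $\Delta$ near $[C]$, because the relative obstruction space injects into $H^1(C,N_{C/\mathcal X_0})=0$. So $C$ lifts to curves in $\mathcal X_t$. The same submersion argument, run relative to $\Delta$, shows that the relative equisingular locus $\mathcal H^{\mathrm{es}}\to\Delta$ is smooth, hence surjects onto a neighborhood of $0$. A general point $C_t$ of the fiber over $t\neq0$ keeps each cusp with its analytic type, by the definition of $B_p^{\mathrm{es}}$. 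It acquires no new singularities, since away from $\mathrm{Sing}(C)$ the curve is smooth and smoothness is open. Its number of cusps therefore matches the number of independent equisingular conditions imposed, that is, the rank of the map.

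\emph{Step 3: maximality.} Suppose $C'_t$ is any cuspidal deformation of $C$ in the family. By upper semicontinuity of $\delta$ and the local structure of the versal deformation of an $A_2$ germ, each cusp of $C'_t$ specializes to a cusp of $C$: the only adjacencies of $A_2$ are $A_1$ and smooth, so a cusp cannot be created out of a smooth point or a node in the limit. Hence cusps of $C'_t$ inject into $\mathrm{Sing}(C)$, and each one forces its tangent direction into the equisingular condition at the corresponding $p$. Counting these conditions through the map bounds the number of cusps by the image dimension.

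The main obstacle is Step 1, specifically making the dimension bookkeeping honest. For an $A_2$ germ, $T^1$ is $2$-dimensional, and the locus preserving the cusp is a single point of the base modulo trivial deformations. So identifying ``the number of cusps'' with the dimension of the image of the global-to-local map requires a careful choice of which subspace (equisingular or its complement) the sections must surject onto. I would first try to set this up via the Greuel--Lossen--Shustin description of equisingular ideals, $I^{\mathrm{es}}=\langle f,\partial f\rangle$ for $A_k$. With that description, the map $H^0(N)\to\bigoplus_p\mathcal O/I^{\mathrm{es}}_p$ is exactly what controls smoothness of $H^{\mathrm{es}}$. I would then check that the statement's hypothesis matches this after reinterpreting $T^{1,\mathrm{es}}$.
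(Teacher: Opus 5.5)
There is a genuine gap, and it sits exactly at the ``main obstacle'' you flagged. It is not bookkeeping: the hypothesis your Step~1 needs is incompatible with the count your Step~2 asserts.

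For the implicit-function argument, $H^0(C,N_{C/\mathcal X_0})$ must surject onto the full normal space of $B^{\mathrm{es}}_p$ in $B_p$. For a cusp, $B^{\mathrm{es}}_p$ is the origin, so that normal space is all of $T^1_{C,p}\cong\mathcal O_{\mathcal X_0,p}/I^{\mathrm{es}}_p$, where $I^{\mathrm{es}}_p=\langle f,\partial_xf,\partial_yf\rangle$ has colength $\tau=2$. This is the Greuel--Lossen--Shustin T-smoothness condition: two conditions per cusp. The statement, with the paper's convention $\dim T^{1,\mathrm{es}}_{C,p}=1$, supplies only one condition per cusp. That does not make $H\to\prod_pB_p/B^{\mathrm{es}}_p$ a submersion, so Step~1 has no input.

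If instead you strengthen the hypothesis so that Step~1 works, then $\dim\operatorname{Im}=2\kappa$ with $\kappa=\#\mathrm{Sing}(C)$. The curve you produce has $\kappa$ cusps, so your Step~2 sentence equating the number of cusps with ``the number of independent equisingular conditions imposed, that is, the rank of the map'' is false: $\kappa$ cusps impose $2\kappa$ conditions. Step~1 silently uses the two-dimensional reading, Step~2's count the one-dimensional one.

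The reconciliation you propose to check at the end cannot succeed, because one condition per cusp genuinely does not suffice. Blow up $\mathbb P^2$ at eight points in special position so that the pencil $|-K_{\mathcal X_0}|$ contains a cuspidal cubic $C$. Move the points along a path leaving that codimension-one locus for $t\neq0$. Then $N_{C/\mathcal X_0}=\mathcal O_C(C)$ has degree $1$ and $\omega_C\cong\mathcal O_C$, so $h^1=0$ and $h^0=1$. The pencil direction has nonzero image in $T^1_{C,p}$, since the cusp is not a base point. Yet for $t\neq0$ no member of $|-K_{\mathcal X_t}|$ has a cusp, so $C$ has no cuspidal deformation into $\mathcal X_t$.

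The paper's proof does not supply the missing step either. It takes $\dim T^{1,\mathrm{es}}_{C,p}=1$, so surjectivity gives $\dim\operatorname{Im}=\#\mathrm{Sing}(C)$ and the count is automatic. It then \emph{asserts} that integrating a single first-order section with prescribed local equisingular components, and lifting through $0\to N_{C/\mathcal X_0}\to N_{C/\mathcal X}\to\mathcal O_C\to0$, yields a family that stays cuspidal. That is precisely the claim your Step~1 was meant to justify.

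What your argument does prove correctly is a modified statement. Assume $H^1(C,N_{C/\mathcal X_0})=0$ and surjectivity onto $\bigoplus_pT^1_{C,p}$. Then the relative Hilbert scheme and the relative equisingular locus are smooth over $\Delta$, so all cusps of $C$ survive on $\mathcal X_t$. Your Step~3 ($\delta$-semicontinuity: at most one singular point of $C'_t$ near each cusp, none near smooth points) shows no cuspidal deformation has more than $\#\mathrm{Sing}(C)=\tfrac12\dim\operatorname{Im}$ cusps. That is a sound result, but with a stronger hypothesis and a different count than the statement as given.
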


\begin{proof}

%\noindent\textit{Proof.}
Since $C$ is a reduced Cartier divisor on the smooth surface $\mathcal X_0$, its
embedded infinitesimal deformations inside $\mathcal X_0$ are governed by the normal
sheaf $N_{C/\mathcal X_0}$. First--order embedded deformations correspond to sections
of $H^0(C,N_{C/\mathcal X_0})$, and obstructions lie in
$H^1(C,N_{C/\mathcal X_0})$. By hypothesis, $C$ is unobstructed, so every first--order
embedded deformation integrates to an actual deformation of $C$ inside $\mathcal X_0$.

Let $p\in\mathrm{Sing}(C)$ be an ordinary cusp. Analytically, the germ $(C,p)$ is
equivalent to $y^2=x^3$. Its local embedded deformation space is
\[
T^1_{C,p}\simeq \mathbb C^2,
\]
while the equisingular deformation space
$T^{1,\mathrm{es}}_{C,p}\subset T^1_{C,p}$ is a one--dimensional linear subspace
corresponding to deformations preserving the analytic type of the cusp. Deformations
transverse to this subspace smooth the cusp and replace it by nodes. Thus,
$T^{1,\mathrm{es}}_{C,p}$ parametrizes precisely those infinitesimal deformations that
keep a cusp at $p$.

The natural map
\[
H^0(C,N_{C/\mathcal X_0})\longrightarrow
\bigoplus_{p\in\mathrm{Sing}(C)} T^{1,\mathrm{es}}_{C,p}
\]
associates to a global first--order deformation of $C$ its collection of local
equisingular deformation directions at the cusps. By surjectivity, given any choice
of equisingular first--order deformation at each cusp, there exists a global embedded
first--order deformation of $C$ inducing exactly these local directions. In
particular, one may choose nonzero equisingular directions at a prescribed subset of
the cusps and the zero direction at the remaining ones.

Fix such a choice and let $\xi\in H^0(C,N_{C/\mathcal X_0})$ be a global section mapping
to it. Since $C$ is unobstructed, $\xi$ integrates to an actual deformation
$C_s\subset\mathcal X_0$ over a small disk with parameter $s$. By construction, for
$s\neq0$, the curve $C_s$ has ordinary cusps exactly at those points where a nonzero
equisingular direction was prescribed, and no worse singularities occur near the
original cusps. At the remaining singular points of $C$, the chosen local deformation
direction is trivial, so the corresponding cusps persist unchanged.

Now consider the total space $\mathcal X$. Since $\mathcal X_0$ is a fiber of the flat
family $\pi:\mathcal X\to\Delta$, there is a canonical exact sequence
\[
0\longrightarrow N_{C/\mathcal X_0}
\longrightarrow N_{C/\mathcal X}
\longrightarrow \mathcal O_C
\longrightarrow 0.
\]
The assumption that embedded deformations of $C$ in $\mathcal X_0$ are unobstructed
implies that infinitesimal deformations of $C$ inside $\mathcal X_0$ lift to
infinitesimal deformations inside the total space $\mathcal X$. Therefore, the family
$C_s\subset\mathcal X_0$ constructed above extends to a two--parameter deformation
inside $\mathcal X$, and by restricting to a suitable one--parameter slice
transverse to $\mathcal X_0$, one obtains a deformation
\[
C_t\subset\mathcal X_t,\qquad t\neq0,
\]
whose special fiber is $C$ and whose general fiber lies on the smooth surface
$\mathcal X_t$.

Since cuspidality is an open condition inside the equisingular deformation space, and
since the deformation is equisingular at the chosen cusps, the curve $C_t$ has only
ordinary cusps at the corresponding points for $t\neq0$. No additional singularities
can appear, because singularities worse than cusps impose extra independent
conditions and therefore occur in closed subsets of higher codimension in the
deformation space. By choosing the deformation generically within the equisingular
locus, these subsets are avoided.

The number of cusps on $C_t$ is therefore equal to the number of independent
equisingular directions realized globally, which is precisely the dimension of the
image of the map
\[
H^0(C,N_{C/\mathcal X_0})\longrightarrow
\bigoplus_{p\in\mathrm{Sing}(C)} T^{1,\mathrm{es}}_{C,p}.
\]
This number is maximal by construction. Indeed, any cuspidal deformation of $C$ inside
the family determines, at each cusp, an equisingular deformation direction, hence an
element of the target space. The dimension of the image of the global map is the
largest possible number of cusps whose equisingular deformation directions can be
simultaneously realized by a global embedded deformation. No deformation can produce
more cusps without violating surjectivity constraints, so this number is maximal
among all cuspidal deformations of $C$ in the family.
This concludes the proof.
\end{proof}
%\hfill$\square$
 
%%%%%%%%%%%%%%%%%%%%%%%%%%%%%%%%%%%%%%%%%%%%%%%%%%%%%%%%%%%%%%%%%%%%% 

\begin{theorem}[Logarithmic cuspidal deformations in a linear system]
Let $\pi:\mathcal X\to\Delta$ be a logarithmically smooth degeneration of projective
surfaces whose special fiber $\mathcal X_0$ is a simple normal crossings divisor.
Let $C\subset\mathcal X_0$ be a reduced curve meeting the singular locus of
$\mathcal X_0$ transversely and assume that all singularities of $C$ are ordinary
cusps.
Suppose that $C$ is logarithmically semiregular, and that the natural map
\[
H^0\!\left(C,N^{\log}_{C/\mathcal X_0}\right)\longrightarrow
\bigoplus_{p\in\mathrm{Sing}(C)} T^{1,\mathrm{es}}_{C,p}
\]
is surjective, where $T^{1,\mathrm{es}}_{C,p}$ denotes the equisingular deformation
space of the cusp at $p$.
Then there exists a logarithmic deformation $C_t\subset\mathcal X_t$ for $t\neq0$
such that $C_t$ has only ordinary cusps, and the number of cusps equals the dimension
of the image of the above map. Moreover, this number is maximal among all cuspidal
deformations of $C$ inside the family.
\end{theorem}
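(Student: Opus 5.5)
The plan is to transport the proof of the preceding cuspidal theorem to the logarithmic setting. Throughout, the normal sheaf $N_{C/\mathcal X_0}$ is replaced by $N^{\log}_{C/\mathcal X_0}$, and the unobstructedness hypothesis is replaced by logarithmic semiregularity together with the canonical exact sequence
\[
0\longrightarrow N^{\log}_{C/\mathcal X_0}\longrightarrow N^{\log}_{C/\mathcal X}\longrightarrow \mathcal O_C\longrightarrow 0
\]
from the Preliminaries.

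First I record that all cusps of $C$ lie in the smooth locus of $\mathcal X_0$. This holds because $C$ meets the double locus transversely, so the points of $C\cap\mathrm{Sing}(\mathcal X_0)$ are smooth points of $C$. Near each cusp $p$ the logarithmic structure is therefore trivial, and $N^{\log}_{C/\mathcal X_0}$ agrees with $N_{C/\mathcal X_0}$ there. Consequently the local map to $T^{1,\mathrm{es}}_{C,p}$ is the classical one, and the local analysis of the earlier proof applies verbatim: $T^{1,\mathrm{es}}_{C,p}$ is one-dimensional and spanned by the direction that preserves $y^2=x^3$.

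Next I use surjectivity. I choose a section $\xi\in H^0(C,N^{\log}_{C/\mathcal X_0})$ whose image is nonzero in the equisingular component at every cusp whose cusp is to be realized, and generic subject to this constraint. I then lift $\xi$ to the total space. By logarithmic semiregularity, the obstruction class of a log deformation in $H^1(C,N^{\log}_{C/\mathcal X_0})$ is detected by its image in $H^2(\mathcal X_0,\mathcal O_{\mathcal X_0})$. Through the extension class, this image is the restriction of the logarithmic Kodaira--Spencer class. For a curve that actually lies in a fiber of $\pi$ (a semiregularity/Bloch-type argument), this image vanishes, so all obstructions vanish order by order. Hence $\xi$, together with a lift of $1\in H^0(\mathcal O_C)$ to $N^{\log}_{C/\mathcal X}$ in the transverse $t$-direction, integrates to a log-smooth family $\mathcal C\to\Delta$ with $\mathcal C_0=C$ and $\mathcal C_t\subset\mathcal X_t$. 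The step I expect to be the main obstacle is exactly this lift: namely, showing that the connecting map $H^0(\mathcal O_C)\to H^1(N^{\log}_{C/\mathcal X_0})$ kills the class $1$, and that higher-order obstructions are controlled by the same injectivity. I would first try to argue that the composite to $H^2(\mathcal X_0,\mathcal O)$ computes the obstruction to deforming the line bundle $\mathcal O(C)$ along $\pi$. I would then invoke the assumption implicit in ``linear system'' that this line bundle extends over $\mathcal X$, which forces the class to vanish.

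Finally, I conclude as in the preceding proof. The equisingular directions are realized in the first-order data, and equisingularity (constancy of the $A_2$ type) is an open condition along the family of equisingular strata. So each chosen cusp persists on $\mathcal C_t$ as an ordinary cusp. Genericity of $\xi$ avoids worse singularities, which occur in higher codimension, and along the double locus $\mathcal C_t$ is smooth by log transversality. The count of cusps is then the dimension of the image of the map, and maximality follows as before: any cuspidal log deformation induces, at each cusp of $C$ it preserves, an element of the target, and these elements lie in the image of the global map.
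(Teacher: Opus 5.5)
Your overall route matches the paper's: replace $N_{C/\mathcal X_0}$ by $N^{\log}_{C/\mathcal X_0}$, lift via log semiregularity and the sequence $0\to N^{\log}_{C/\mathcal X_0}\to N^{\log}_{C/\mathcal X}\to\mathcal O_C\to0$, choose $\xi$ using surjectivity, then argue genericity and maximality.

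\textbf{The local mechanism for keeping a cusp.} This is the genuine gap, and the paper's proof has the same one. For $f=y^2-x^3$ one has $T^1_{C,p}=\mathbb C\{x,y\}/(y,x^2)$, with basis $1,x$. In the miniversal family $y^2=x^3+bx+a$, the fibre has a cusp only over $a=b=0$; it is nodal along $4b^3+27a^2=0$ and smooth elsewhere. So \emph{no} nonzero vector of $T^1_{C,p}$ preserves the cusp. A first-order deformation $f+\varepsilon g$ keeps the cusp iff $g$ lies in the equisingular ideal $I^{\mathrm{es}}_p=(y,x^2)$, i.e.\ iff its image in $T^1_{C,p}$ is \emph{zero}. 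With the definition in the Preliminaries, $T^{1,\mathrm{es}}_{C,p}=0$, and choosing $\xi$ with nonzero local image destroys the cusp to first order.

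Even a correct choice $\xi\in H^0(C,\mathcal I^{\mathrm{es}}\otimes N^{\log}_{C/\mathcal X_0})$ would not be enough. Integrating $\xi$ through the smoothness of the full log Hilbert scheme gives no control of the higher-order terms. Staying in the equisingular stratum is a closed condition, so no openness argument keeps the family there.

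What you need is unobstructedness of the equisingular functor, whose obstructions lie in $H^1(C,\mathcal I^{\mathrm{es}}\otimes N^{\log}_{C/\mathcal X_0})$. Consider the sequence $0\to\mathcal I^{\mathrm{es}}\otimes N^{\log}_{C/\mathcal X_0}\to N^{\log}_{C/\mathcal X_0}\to\bigoplus_pT^1_{C,p}\to0$. From it, this group injects into $H^1(C,N^{\log}_{C/\mathcal X_0})$, and hence by log semiregularity into $H^2(\mathcal X_0,\mathcal O_{\mathcal X_0})$, exactly when $H^0(C,N^{\log}_{C/\mathcal X_0})\to\bigoplus_pT^1_{C,p}$ is surjective onto the \emph{full} two-dimensional spaces. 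That is the hypothesis that does the work, and it gives that all cusps of $C$ survive on $C_t$.

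Maximality then comes from semicontinuity of $\delta$, not from your ``lies in the image'' argument, which gives no bound once the map is surjective. For small $t$, singular points of $C_t$ can only approach $\mathrm{Sing}(C)$, because the family is log smooth along the double locus. Since a cusp has $\delta=1$, at most one singular point of $C_t$ approaches each cusp.

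\textbf{The lift to $\mathcal X_t$.} Here your instinct is right and the paper's argument is not. The paper asserts that the image of the log Kodaira--Spencer class in $H^2(\mathcal X_0,\mathcal O_{\mathcal X_0})$ vanishes for every one-parameter log smooth deformation. But that image is the first-order obstruction to extending $\mathcal O_{\mathcal X_0}(C)$ (the Kodaira--Spencer class cupped with $c_1$), and it is nonzero in general. For instance, every reduced connected curve on a K3 surface is semiregular. Yet such a curve does not deform into $\mathcal X_t$ along a family of K3 surfaces transverse to the Noether--Lefschetz locus of $[C]$.

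Your Bloch-type argument is the correct mechanism. Classically, semiregularity is equivalent to surjectivity of $H^1(\mathcal O)\to H^1(\mathcal O(C))$. So once the line bundle extends, every obstruction to lifting the defining section can be absorbed by retwisting the lifted bundle. However, the extension of $\mathcal O(C)$ over $\mathcal X$ is an extra hypothesis that must be added to the statement, not read as ``implicit''. You should also drop the intermediate claim that the image vanishes merely because $C$ lies in a fibre of $\pi$.
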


%\bigskip

\begin{proof}
We endow $\mathcal X$ with the divisorial logarithmic structure induced by the special
fiber $\mathcal X_0$, and $\Delta$ with the standard logarithmic point. With these
structures, the morphism $\pi:\mathcal X\to\Delta$ is logarithmically smooth. Since
$C$ meets the singular locus of $\mathcal X_0$ transversely and is a logarithmic local
complete intersection, its embedded logarithmic deformation theory is governed by the
logarithmic normal sheaf $N^{\log}_{C/\mathcal X_0}$.

Infinitesimal logarithmic embedded deformations of $C$ inside $\mathcal X_0$ are
parametrized by $H^0(C,N^{\log}_{C/\mathcal X_0})$, while obstructions lie in
$H^1(C,N^{\log}_{C/\mathcal X_0})$. Because $\mathcal X_0$ is a logarithmic divisor in
$\mathcal X$, there is a canonical logarithmic normal bundle exact sequence
\[
0\longrightarrow N^{\log}_{C/\mathcal X_0}
\longrightarrow N^{\log}_{C/\mathcal X}
\longrightarrow \mathcal O_C
\longrightarrow 0,
\]
whose extension class lies in
$H^1(C,N^{\log}_{C/\mathcal X_0})$ and coincides with the restriction to $C$ of the
logarithmic Kodaira--Spencer class of the degeneration.

The logarithmic semiregularity assumption means that the natural map
\[
H^1(C,N^{\log}_{C/\mathcal X_0})\longrightarrow
H^2(\mathcal X_0,\mathcal O_{\mathcal X_0})
\]
is injective. The image of the logarithmic Kodaira--Spencer class in
$H^2(\mathcal X_0,\mathcal O_{\mathcal X_0})$ vanishes for a one--parameter
logarithmically smooth deformation. Injectivity therefore forces the extension class
of the logarithmic normal bundle sequence to vanish, so the sequence splits
logarithmically. As a consequence, every infinitesimal logarithmic embedded deformation
of $C$ inside $\mathcal X_0$ lifts unobstructedly to a logarithmic deformation inside
the total space $\mathcal X$.

Let $p\in\mathrm{Sing}(C)$ be an ordinary cusp. Analytically, $(C,p)$ is equivalent to
$y^2=x^3$. Its local deformation space $T^1_{C,p}$ is two--dimensional, while the
equisingular subspace $T^{1,\mathrm{es}}_{C,p}$ is one--dimensional and parametrizes
precisely those first--order deformations preserving the cusp. Deformations transverse
to this subspace smooth the cusp and replace it by nodes. Thus,
$T^{1,\mathrm{es}}_{C,p}$ encodes the infinitesimal condition for cuspidality.
The surjectivity of the map
\[
H^0(C,N^{\log}_{C/\mathcal X_0})\longrightarrow
\bigoplus_{p\in\mathrm{Sing}(C)} T^{1,\mathrm{es}}_{C,p}
\]
implies that for any choice of equisingular first--order deformation directions at the
cusps, there exists a global logarithmic first--order deformation of $C$ inducing
exactly these local directions. Choosing a basis of the image, one obtains independent
global logarithmic deformation directions corresponding to preserving cusps at the
maximal possible number of points.
Since logarithmic obstructions vanish, these infinitesimal deformations integrate to
actual logarithmic deformations of $C$ inside $\mathcal X$. Restricting to a suitable
one--parameter logarithmic deformation transverse to $\mathcal X_0$ yields a family
\[
C_t\subset\mathcal X_t,\qquad t\neq0,
\]
whose special fiber is $C$. By construction, the deformation is equisingular at the
chosen cusps, so for $t\neq0$ the curve $C_t$ has ordinary cusps at those points.
Cuspidality is an open condition in the equisingular locus, so the cusps persist under
small deformations.

No additional singularities can appear on $C_t$. Singularities worse than cusps impose
extra independent conditions and therefore occur in proper closed subsets of the local
deformation spaces. Because the global logarithmic deformation is chosen generically
within the equisingular locus, these subsets are avoided. Hence $C_t$ has only ordinary
cusps as singularities.

The number of cusps on $C_t$ is exactly the number of independent equisingular
directions realized globally, which equals the dimension of the image of the map from
$H^0(C,N^{\log}_{C/\mathcal X_0})$ to the direct sum of the equisingular local spaces.
This number is maximal, because any cuspidal logarithmic deformation of $C$ determines
at each cusp an equisingular deformation direction, and only those collections of
directions lying in the image of the global map can occur simultaneously. No
deformation can therefore produce more cusps than this dimension.
This proves the existence and maximality of cuspidal logarithmic deformations of $C$
and completes the proof.
\end{proof}%\hfill$\square$
 
%%%%%%%%%%%%%%%%%%%%%%%%%%%%%%%%%%%%%%%%%%%%%%%%%%%%%%%%%%%%%%%%%%%%%
%%%%%%%%%%%%%%%%%%%%%%%%%%%%%%%%%%%%%%%%%%%%%%%%%%%%%%%%%%%%%%%%%%%%%

\section{Cuspidal Loci as Boundary Components of Equigeneric Severi Strata}

Let $S$ be a smooth projective surface and let $L$ be a line bundle on $S$.
Denote by $|L|$ the complete linear system and by $p_a(L)$ the arithmetic genus of
curves in $|L|$. For a reduced curve $C\in|L|$, let
\[
\delta(C)=\sum_{p\in\mathrm{Sing}(C)}\delta_p
\]
be the total $\delta$--invariant. The equigeneric Severi stratum
$V^{\mathrm{eg}}_{\delta}(|L|)$ is defined as the locus of curves in $|L|$ whose total
$\delta$--invariant equals $\delta$.

It is classical that ordinary nodes are the generic singularities compatible with a
fixed $\delta$--invariant, while higher singularities, such as cusps, occur in higher
codimension. The purpose of this section is to make precise the statement that
cuspidal curves arise as boundary points of equigeneric Severi strata and can be
interpreted as collisions of nodes.

\begin{proposition}
Let $(C,p)$ be a reduced planar curve singularity with $\delta_p=2$.
Then $(C,p)$ is equigeneric--deformable to either two ordinary nodes or to one
ordinary cusp, and the cusp occurs as a boundary point of the nodal locus in the local
equigeneric deformation space.
\end{proposition}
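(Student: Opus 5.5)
The plan is to reduce to the classification of planar singularities with $\delta_p=2$ and then work in explicit semiuniversal deformations. A reduced plane curve germ with $\delta=2$ is, up to analytic equivalence, either a tacnode $A_3$ ($y^2=x^4$, two smooth branches) or a ramphoid cusp $A_4$ ($y^2=x^5$, one branch). This follows from $\delta=\tfrac12(\mu+r-1)$, which together with $\mu\ge 2\delta-r+1$ leaves only these cases; $D_4$ already has $\delta=3$.

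Since $\delta$ is constant in equigeneric families, "one ordinary cusp" cannot occur on its own. I will read it as a cusp together with a node, so that $\delta=1+1$. The statement to prove is then that the generic point of the local equigeneric stratum $\mathrm{EG}\subset T^1_{C,p}$ carries two nodes, and that the $A_2+A_1$ configurations lie in its closure.

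The first step is to describe $\mathrm{EG}$ via the standard fact (Teissier, Diaz--Harris) that the $\delta$-constant stratum has codimension $\delta=2$ in the semiuniversal base. It is also the image of the base of the family of deformations of the normalization parametrization, and its generic fibre is nodal with exactly $\delta$ nodes. The parametrization description is what I will actually compute with.

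For $A_4$ I take $x=t^2$, $y=t^5+a t^3+b t$. This gives a two-parameter family, which matches $\tau-\delta=4-2$. For generic $(a,b)$ the image has two nodes. These come from the two pairs $t\ne\pm s$ with $t^2=s^2$ and $y(t)=y(s)$, that is, from the nonzero roots of $t^4+at^2+b=0$ up to sign.

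A cusp occurs exactly where $dx/dt$ and $dy/dt$ vanish simultaneously at some $t_0$. Since $dx/dt=2t$ vanishes only at $t=0$, this forces $t_0=0$, hence $b=0$. On the line $b=0$, $a\neq0$, the curve $x=t^2$, $y=t^5+at^3$ has a cusp at $t=0$ and a node coming from $t^2=-a$. So the $A_2+A_1$ locus is the curve $\{b=0\}$ inside the two-dimensional nodal stratum, and it is visibly a limit of two-nodal curves as $b\to0$; this is the boundary claim.

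For $A_3$ I parametrize the two branches as $y=\pm(x^2+c)$, up to coordinate changes. This gives a one-dimensional $\mathrm{EG}$ whose nonzero points are two-nodal. The family $A_2+A_1$ has total Tjurina number $3=\tau(A_3)$ and therefore can appear in the versal family of $A_3$ only at the origin. I will record this, so that the cusp boundary is realized in the $A_4$ case. If the proposition is meant to cover the tacnode as well, it should be read with that qualification.

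The main obstacle is justifying that the parametrization families above are in fact the full equigeneric strata, and that no other singular configuration (for instance a single $A_3$ or an $A_2$ without a node) appears for nearby $(a,b)$. I would first settle this by a dimension count. The $A_3$ stratum and the $A_2+A_1$ stratum have codimension $3$ in $T^1_{A_4}\simeq\mathbb C^4$, so each is at most one-dimensional inside the two-dimensional $\mathrm{EG}$. I would then check directly from the resultant of $t^4+at^2+b$ that the only degenerations off the generic locus are $b=0$, giving the cusp plus node, and $a^2=4b$, which produces a tacnode rather than a cusp.
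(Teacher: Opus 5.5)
Your argument is correct, and it proves the statement in the only form in which it can hold. An ordinary cusp has $\delta=1$, so a germ with $\delta_p=2$ cannot be equigenerically deformed to a lone cusp. Your reading is the right correction: cusp plus node, occurring only from $A_4$. It never occurs from the tacnode, since $y^2=p(x)$ with $\deg p=4$ cannot have roots of multiplicities $3$ and $2$.

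The paper's proof is a soft dimension heuristic. It uses the same input you cite: the equigeneric locus has codimension $\delta_p$ and is generically two-nodal. It then asserts that a cusp arises when the two nodes merge and their tangent directions align, and that this cuts out a proper closed subset. It never classifies the germ and never exhibits a cuspidal fibre.

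Your explicit family $x=t^2$, $y=t^5+at^3+bt$, whose image is $y^2=x(x^2+ax+b)^2$, actually produces the boundary configuration. It also shows that the paper's mechanism is not what happens:
\begin{itemize}
\item the collision of the two nodes ($a^2=4b$) gives a tacnode;
\item the cusp on $\{b=0\}$ appears when a \emph{single} node runs into the ramification point of $x=t^2$. This node comes from a pair $\{t,-t\}$ with $t\to0$.
\end{itemize}
What the paper's route buys is only brevity; yours buys an honest identification of the strata and of the correct statement.

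A few small repairs:
\begin{itemize}
\item Milnor's formula $\mu=2\delta-r+1$ is an identity, so adding $\mu\ge 2\delta-r+1$ constrains nothing. The classification actually comes from $\delta\ge m(m-1)/2$. This forces multiplicity $2$, hence an $A_k$ with $\delta=\lfloor (k+1)/2\rfloor$, so $k\in\{3,4\}$.
\item Your Tjurina-number argument for the tacnode is loose, because semicontinuity gives only an inequality. The root-multiplicity count above is the clean replacement.
\item You flag completeness of your parametrized family as the main obstacle, but the boundary claim does not need it. You only need the fibres near $b=0$ to be two-nodal and the fibre at $b=0$ to be a cusp plus a node, and you compute both.
\item If you want completeness anyway, normalize to $x=s^2$ and subtract the even part of $y$, which is a function of $x$. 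Then apply Weierstrass preparation to the odd part $s\,h(s^2)$. Dividing by a unit in $x$ gives $y=s(s^4+as^2+b)$.
\end{itemize}
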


\noindent\textit{Proof.}
Since $(C,p)$ is planar, it admits a local embedding in a smooth surface and its
embedded deformation theory is governed by the finite--dimensional vector space
\[
T^1_{C,p}=\mathrm{Ext}^1(L_{C,p},\mathcal O_{C,p}).
\]
The equigeneric deformation space is the closed analytic subset of $T^1_{C,p}$
consisting of those deformations preserving $\delta_p=2$. It is well known that this
equigeneric locus has codimension $\delta_p=2$ in the base of the semiuniversal
deformation.

Inside the equigeneric locus, the equisingular stratum preserving two distinct nodes
is open and dense. Indeed, ordinary nodes are stable singularities and impose the
minimal number of conditions among singularities with $\delta=1$. A deformation in
which the two nodes remain distinct corresponds to a general point of the equigeneric
locus.

The cusp, analytically given by $y^2=x^3$, also has $\delta=1$, but appears when two
nodes coalesce. This requires the additional condition that the two singular points
merge and that their tangent directions align. These extra conditions cut out a proper
closed subset of the equigeneric locus. Consequently, the cuspidal deformation appears
as a boundary point of the locus parametrizing curves with two distinct nodes.

Thus, locally in the equigeneric deformation space, the cuspidal stratum lies in the
closure of the nodal stratum and corresponds to a collision of two nodes. \hfill$\square$

\begin{theorem}
Let $C\in|L|$ be a reduced curve on $S$ with only ordinary cusps as singularities, and
let $\delta=\delta(C)$. Then $[C]$ lies in the closure of the nodal Severi variety
$V_{\delta}(|L|)$ parametrizing $\delta$--nodal curves. Moreover, the cuspidal locus is
a boundary stratum of codimension one inside the equigeneric Severi stratum
$V^{\mathrm{eg}}_{\delta}(|L|)$.
\end{theorem}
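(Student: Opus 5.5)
The plan is to reduce to the local picture at each cusp and then globalize via a transversality statement for the map from $|L|$ to the product of local semiuniversal deformations. Locally, an ordinary cusp $(C,p)$ has semiuniversal deformation $y^2=x^3+ax+b$ with base $T^1_{C,p}\simeq k^2$ in coordinates $(a,b)$. The $\delta$--constant (equigeneric) locus is the discriminant curve
\[
D_p=\{4a^3+27b^2=0\}\subset T^1_{C,p}.
\]
At every point $(a,b)\neq(0,0)$ of $D_p$ the fiber has exactly one ordinary node, while at the origin it has the cusp. So $D_p$ is the closure of the one--nodal stratum, and the cusp is the single boundary point of this one--dimensional locus. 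This is the $\delta_p=1$ analogue of the preceding Proposition: the cusp is a codimension one point inside the local equigeneric locus. The equisingular subspace $T^{1,\mathrm{es}}_{C,p}$ is the tangent direction to the cuspidal cone of $D_p$ at $0$.

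To globalize, let $\mathrm{Sing}(C)=\{p_1,\dots,p_\delta\}$ and consider the germ of the map
\[
\Phi:(|L|,[C])\longrightarrow \prod_{i=1}^{\delta}(T^1_{C,p_i},0).
\]
Its differential is the composite
\[
H^0(C,N_{C/S})=H^0(C,L|_C)\to\bigoplus_i T^1_{C,p_i}
\]
(modulo the $H^0(\mathcal O_S)$ factor coming from the projectivization). I would impose, or derive from a vanishing such as $H^1(S,L\otimes\mathcal I_Z)=0$ for $Z$ the scheme defined by the Tjurina ideals $(f,\partial_xf,\partial_yf)$ at the cusps, that this differential is surjective. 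Then $\Phi$ is a submersion at $[C]$, in the same spirit as the surjectivity hypothesis of the Cuspidal deformation theorem above. Under a submersion, $V^{\mathrm{eg}}_\delta(|L|)$ near $[C]$ is the preimage $\Phi^{-1}(\prod_i D_{p_i})$, a product of cuspidal curves times a smooth factor. Points with all coordinates $\neq 0$ lie in $V_\delta(|L|)$: they have $\delta$ nodes near the $p_i$ and are smooth elsewhere, since smoothness away from the $p_i$ is open. Since $(0,\dots,0)$ is in the closure of $\prod_i(D_{p_i}\setminus 0)$, this gives $[C]\in\overline{V_\delta(|L|)}$.

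For the codimension statement, fix a cusp $p_i$. The locus in $V^{\mathrm{eg}}_\delta$ where $p_i$ stays a cusp is the preimage of $\{0\}\times\prod_{j\ne i}D_{p_j}$. Since $\{0\}$ has codimension one in the curve $D_{p_i}$, this is a divisor in the equigeneric stratum lying in the closure of its nodal part, i.e. a codimension one boundary stratum. The full cuspidal locus is the transverse intersection of these $\delta$ boundary divisors. I would state the theorem's ``codimension one'' in this per--cusp sense, since the locus where all $\delta$ cusps persist has codimension $\delta$ in the equigeneric stratum. I would also note that $D_p$ is unibranch but singular at $0$, so ``codimension'' is taken in the sense of dimension of analytic germs.

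The main obstacle is the transversality (surjectivity of $H^0(L|_C)\to\bigoplus_i T^1_{C,p_i}$). Without it, $\Phi$ could map $|L|$ into a proper subspace meeting $\prod_i D_{p_i}$ only at the origin, and then $[C]$ would be isolated in the equigeneric stratum. I would first try to obtain it from $H^1(S,L\otimes\mathcal I_Z)=0$ via the sequence $0\to L\otimes\mathcal I_Z\to L\to L\otimes\mathcal O_Z\to 0$, for instance via Kodaira vanishing when $L-K_S$ is sufficiently ample relative to $2\delta$. If that fails, I would weaken the claim to what survives under the weaker hypothesis that the image meets each factor $T^1_{C,p_i}$ in a line not tangent to $D_{p_i}$. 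That condition still lets one move along each branch of the discriminant independently and gives the closure statement.
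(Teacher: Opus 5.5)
Under the versality hypothesis you add, your argument is correct, and it takes a genuinely different route from the paper's. The paper works only with first-order linear algebra. It takes the equigeneric part of $T^1_{C,p}$ to be all of $T^1_{C,p}$, and lets the directions complementary to $T^{1,\mathrm{es}}_{C,p}$ split each cusp into two nodes. It treats the equigeneric condition as a linear condition of codimension $\delta$. It then globalizes with no hypothesis tying $H^0(S,L)$ to the local spaces.

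You use the exact local model instead:
\begin{itemize}
\item since $\delta_p=1$ and $\delta$ is upper semicontinuous, the non-cuspidal equigeneric fibers near a cusp carry exactly one node, not two;
\item the local $\delta$-constant stratum is the cuspidal curve $D_p$;
\item the submersion $\Phi$ identifies $V^{\mathrm{eg}}_\delta(|L|)$ near $[C]$ with $\prod_i D_{p_i}$ times a smooth germ, which is singular rather than linear.
\end{itemize}
This yields the closure statement and the exact codimensions at once.

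The price is the hypothesis $H^1(S,L\otimes\mathcal I_Z)=0$, and some such hypothesis cannot be avoided. Blow up $\mathbb P^2$ at ten distinct smooth points of a cuspidal cubic. The strict transform $C$ has one ordinary cusp and $C^2=-1$, so $|L|=\{C\}$ and $V_1(|L|)=\emptyset$. Your per-cusp reading of ``codimension one'' is likewise forced. Under versality, the locus where all $\delta$ cusps persist has codimension $\delta$ in $V^{\mathrm{eg}}_\delta(|L|)$. This matches the paper's own expected dimensions $\dim|L|-2\kappa$ and $\dim|L|-\delta$, and the paper's argument in fact only handles one cusp at a time.

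Two pieces of your proposal should be discarded.

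\emph{The fallback.} It already fails for $\delta=1$. Let $S$ be the rational elliptic surface obtained by blowing up the nine base points of the pencil spanned by a cuspidal cubic and a general cubic $F$. Let $C$ be the cuspidal fiber and $L=-K_S$, so $|L|$ is the pencil. Since $F$ does not vanish at the cusp, $d\Phi$ maps onto a line in $T^1_{C,p}$ different from $\{b=0\}$. Yet every member near $C$ is smooth, so $[C]\notin\overline{V_1(|L|)}$. The reason is that $D_p$ is unibranch and singular at $0$, so any smooth arc through $0$, tangent to $\{b=0\}$ or not, meets $D_p$ only at $0$. Following $D_p$ requires an arc like $t\mapsto(-3t^2,2t^3)$, which is higher-order information that no first-order line condition supplies.

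\emph{The identification of $T^{1,\mathrm{es}}_{C,p}$.} It is not the tangent cone $\{b=0\}$ of $D_p$. The curve $y^2=x^3+\varepsilon x$ is smooth for $\varepsilon\neq0$. Moreover, $y^2=x^3+ax+b$ has a cusp only when $x^3+ax+b$ is a cube, which forces $a=b=0$. So the equisingular stratum of the cusp in the semiuniversal base is just the origin. You never use this remark, since you correctly detect persistence of the cusp by $\Phi_i=0$, so simply delete it.

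A minor correction: $T_{[C]}|L|$ is $H^0(S,L)$ modulo the section defining $C$. This can be strictly smaller than $H^0(C,N_{C/S})$ when $h^1(\mathcal O_S)\neq0$. Surjectivity must therefore be required of $H^0(S,L)$, which is exactly what $H^1(S,L\otimes\mathcal I_Z)=0$ provides.
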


\noindent\textit{Proof.}
Consider the equigeneric deformation space of $C$ inside $|L|$. Infinitesimal embedded
deformations are parametrized by $H^0(C,N_{C/S})$, and the equigeneric condition cuts
out a linear subspace whose codimension equals $\delta$. By assumption, $C$ has only
cusps, and each cusp contributes $\delta_p=1$ to the total $\delta$.

For an ordinary cusp $p$, the local deformation space $T^1_{C,p}$ is two--dimensional.
Its equigeneric subspace is also two--dimensional, while the equisingular subspace
preserving the cusp is one--dimensional. The complementary directions in the
equigeneric space smooth the cusp into two ordinary nodes. Hence, locally at each
cusp, the equigeneric stratum contains an open subset parametrizing deformations with
two nodes replacing the cusp.

Globalizing this description, one sees that the equigeneric deformation space of $C$
contains an open dense subset whose points correspond to curves in which each cusp has
split into two distinct nodes. Since each cusp contributes $\delta_p=1$, the total
number of nodes on such a curve is exactly $\delta(C)$.
The locus of curves where a cusp persists is defined by the additional condition that
the deformation direction lies in the equisingular subspace at that cusp. This is a
linear condition of codimension one inside the equigeneric deformation space. Hence,
the cuspidal locus is a proper closed subset of codimension one inside
$V^{\mathrm{eg}}_{\delta}(|L|)$.

Since nodal curves form a dense open subset of the equigeneric stratum, and cuspidal
curves arise when nodes collide, the point $[C]$ lies in the closure of the nodal
Severi variety $V_{\delta}(|L|)$. This shows that cuspidal curves are boundary points
of the equigeneric Severi stratum. \hfill$\square$

\begin{theorem}
Let $\pi:\mathcal C\to B$ be a flat family of reduced curves on $S$ whose general fiber
$\mathcal C_b$ is $\delta$--nodal. Assume that the family is equigeneric, that is,
$\delta(\mathcal C_b)$ is constant. Then any special fiber of the family may acquire
cusps only as limits of colliding nodes, and no cusp can appear without at least two
nodes merging.
\end{theorem}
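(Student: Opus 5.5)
We argue by semicontinuity of the local $\delta$--invariant combined with the local description of the equigeneric deformation space near a cusp given in the Proposition above. Fix a point $b_0\in B$ and a point $q$ of the special fiber $\mathcal C_{b_0}$ at which an ordinary cusp appears. After passing to a one--parameter curve $\Delta\to B$ through $b_0$ whose general point maps to the locus of $\delta$--nodal fibers (possible since that locus is dense), we reduce to a family $\mathcal C_s$, $s\in\Delta$, with $\mathcal C_s$ being $\delta$--nodal for $s\neq 0$ and $\mathcal C_0=\mathcal C_{b_0}$.

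The key step is to localize the $\delta$--invariant. Choose a small Milnor ball $U$ around $q$ on $S$ meeting $\mathcal C_0$ only near $q$. Upper semicontinuity of $\delta$ in flat families gives
\[
\sum_{x\in \mathrm{Sing}(\mathcal C_s)\cap U}\delta_x\le \delta_q(\mathcal C_0)=1
\]
for small $s$, and the same inequality holds for the balls around every other singular point of $\mathcal C_0$. Since the family is equigeneric, the total $\delta$ is constant, so all these local inequalities are equalities: no $\delta$ is lost outside the union of the balls, and the singular points of $\mathcal C_s$ in $U$ carry total $\delta$ equal to that of $\mathcal C_0$ at $q$. Because $\mathcal C_s$ is nodal, these are nodes, and their number equals the local $\delta$ they account for. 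Hence the nodes of $\mathcal C_s$ that specialize to $q$ are exactly those absorbed into the cusp; the cusp therefore appears only as the limit of nodes of $\mathcal C_s$ tending to $q$, which is the first assertion.

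For the second assertion, that at least two nodes must merge, I would invoke the Proposition on $\delta_p=2$ and the preceding Theorem. Their local description of the equigeneric stratum says that the nodal deformations adjacent to a cusp arise by splitting the cusp into two ordinary nodes. The cuspidal stratum is the codimension--one boundary where two such nodes coalesce with aligned tangents. A single node cannot specialize to a cusp with constant $\delta$, since the node is an $A_1$ singularity and the $\mu$--constant stratum of $A_1$ contains only $A_1$. So the limiting configuration at $q$ must come from at least two nodes colliding. Combining this local statement over all cusps of $\mathcal C_0$ gives the theorem.

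The main obstacle is the bookkeeping at the cusp. The local count gives $\delta_q=1$, while the Proposition phrases the cusp as a collision of two nodes. I must reconcile this: either by interpreting the collision through the $\delta_p=2$ germ of that Proposition, or by arguing via Milnor numbers ($\mu(A_2)=2=2\mu(A_1)$) that the relevant constant invariant near $q$ forces two vanishing nodes. I would try the Milnor number route first, using $\mu=2\delta-r+1$ together with the branch count, to make the ``two nodes merge'' statement rigorous.
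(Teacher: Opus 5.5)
Your semicontinuity argument in the first paragraph is correct, but it refutes the second assertion rather than supporting it. Because the family is equigeneric, every local inequality is an equality. So the singular points of $\mathcal C_s$ inside the Milnor ball $U$ around the cusp $q$ are nodes with total $\delta$ equal to $\delta_q=1$: \emph{exactly one} node of $\mathcal C_s$ specializes to $q$. The ``bookkeeping obstacle'' you flag is therefore not something to reconcile. It shows that the clause ``no cusp can appear without at least two nodes merging'' is false.

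A concrete counterexample is the pencil of plane cubics $y^2z=x^3+tx^2z$:
\begin{itemize}
\item for $t\neq0$ the fiber is a cubic whose only singularity is a node at $[0:0:1]$, with local equation $y^2-tx^2-x^3$;
\item for $t=0$ it is the cuspidal cubic;
\item $\delta=1$ throughout.
\end{itemize}
This is a flat equigeneric family of reduced curves with $1$--nodal general fiber, in which the cusp is the limit of a single node whose two branches $y=\pm\sqrt t\,x+\cdots$ become tangent.

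The same picture appears locally in the versal deformation $y^2=x^3+ax+b$ of the cusp. A cubic in $x$ has at most one multiple root, so no nearby fiber has two singular points. The equigeneric stratum $4a^3+27b^2=0$ is $1$--nodal away from the origin. Two nodes colliding with $\delta$ constant produce a $\delta=2$ singularity, such as a tacnode or $y^2=x^5$. That is the situation of the Proposition on $\delta_p=2$ germs, which does not apply to an ordinary cusp. The paper's own proof rests on the same incorrect local claim, namely that the equigeneric locus of the cusp's versal deformation is generically two--nodal. So appealing to that Proposition and to the preceding Theorem cannot close the gap.

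The specific arguments in your later paragraphs also fail on their own terms. Whether a node can specialize to a cusp is governed by the versal deformation of the cusp, through the adjacency $A_2\to A_1$, which holds. It is not governed by the $\mu$--constant stratum of $A_1$, which only describes deformations \emph{of} a node. Moreover, an equigeneric family need not be $\mu$--constant. In the example above, $\mu$ jumps from $1$ to $2$ at $t=0$ while $\delta$ stays $1$ and the number of branches drops from $2$ to $1$, exactly as $\mu=2\delta-r+1$ predicts. So the Milnor number is only upper semicontinuous here, and your planned ``Milnor number route'' cannot force two vanishing nodes.

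What your first paragraph does prove, and what is true, is the corrected statement. Each cusp of a special fiber is the limit of exactly one node of the nearby fibers. More generally, the $\delta$ nodes of the general fiber distribute among the singular points $q$ of the special fiber, with exactly $\delta_q$ of them converging to $q$.
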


\begin{proof}
Since the family is equigeneric, the arithmetic genus of the fibers is constant and
the total $\delta$--invariant is preserved. Ordinary nodes are stable under small
deformations, so for $b$ general the singularities of $\mathcal C_b$ are isolated and
transverse.

Suppose that a special fiber $\mathcal C_{b_0}$ has a cusp at a point $p$. The local
$\delta$--invariant of a cusp equals $1$, the same as that of a node, but the local
Tjurina number is $2$. In the versal deformation of the cusp, the equigeneric locus
contains an open subset parametrizing curves with two distinct nodes. Therefore, in
any equigeneric deformation specializing to a cusp, the general fiber must have two
nodes whose limit is the cusp.

If a cusp were to appear without the collision of nodes, it would correspond to a
deformation direction transverse to the equigeneric locus, which would increase the
total $\delta$--invariant or violate flatness. This is impossible in an equigeneric
family. Hence cusps arise precisely as limits of pairs of nodes, and no other
mechanism can produce them in equigeneric degenerations. %\hfill$\square$

\noindent {\it Conclusion}
These results show that cuspidal Severi--type loci are not generic components of
equigeneric deformation spaces, but rather boundary strata where nodal singularities
collide. Equigeneric Severi strata are dominated by nodal curves, while cuspidal curves
occupy special positions of higher codimension. This geometric picture explains both
the rigidity of cuspidal curves and their role as degenerations of nodal curves in
families.

\end{proof}

\section{Explicit Cuspidal Severi--Type Loci: Examples and Dimension Counts}

We work out explicit cuspidal Severi--type loci on three classes of surfaces: plane
curves, $K3$ surfaces, and Hirzebruch surfaces. In each case we determine the expected
dimension, prove existence under natural hypotheses, and show sharpness of the bounds.

\subsection*{Plane curves}

Let $S=\mathbb P^2$ and let $L=\mathcal O_{\mathbb P^2}(d)$ with $d\ge 3$. The complete
linear system has dimension
\[
\dim |L|=\binom{d+2}{2}-1.
\]
The arithmetic genus of a plane curve of degree $d$ is
\[
p_a(d)=\frac{(d-1)(d-2)}{2}.
\]
An ordinary cusp has $\delta=1$ and Tjurina number $\tau=2$, so imposing a cusp at an
unspecified point imposes two independent conditions on $|L|$.

Let $V^{\mathrm{cusp}}_{d,\kappa}\subset |L|$ denote the locus of plane curves of degree
$d$ with exactly $\kappa$ ordinary cusps and no other singularities. The expected
dimension is
\[
\operatorname{expdim} V^{\mathrm{cusp}}_{d,\kappa}
=
\binom{d+2}{2}-1-2\kappa.
\]
Since the arithmetic genus satisfies
\[
p_a(d)=g+\kappa,
\]
any cuspidal curve has $\kappa\le p_a(d)$. This numerical bound is necessary but not
sufficient for existence.

\begin{theorem}
If $2\kappa\le \binom{d+2}{2}-1$, then $V^{\mathrm{cusp}}_{d,\kappa}$ is nonempty and has
a component of the expected dimension. Moreover, $\kappa$ is maximal among cuspidal
deformations inside $|L|$.
\end{theorem}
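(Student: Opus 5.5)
The plan is to deduce the statement from the Cuspidal Deformation Theorem of Section~3, applied to the trivial family $\mathcal X=\mathbb P^2\times\Delta$, so that $\mathcal X_0=\mathcal X_t=\mathbb P^2$. Its existence and maximality statements then give nonemptiness and maximality of $\kappa$ inside $|L|$. The dimension statement will come from a tangent space computation at a point of $V^{\mathrm{cusp}}_{d,\kappa}$. Concretely, three things are needed:
\begin{enumerate}
\item a reduced curve $C_0\in|L|$ with exactly $\kappa$ ordinary cusps;
\item unobstructedness of embedded deformations of $C_0$;
\item surjectivity of the map $H^0(C_0,N_{C_0/\mathbb P^2})\to\bigoplus_p T^{1,\mathrm{es}}_{C_0,p}$.
\end{enumerate}

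Unobstructedness is the routine step. Here $N_{C_0/\mathbb P^2}=\mathcal O_{C_0}(d)$, and by Serre duality on the plane curve,
\[
H^1(C_0,\mathcal O_{C_0}(d))\simeq H^0(C_0,\mathcal O_{C_0}(-3))^\vee=0 .
\]
For surjectivity, I will use the restriction sequence $0\to\mathcal O_{\mathbb P^2}\to\mathcal O_{\mathbb P^2}(d)\to\mathcal O_{C_0}(d)\to0$ together with $H^1(\mathcal O_{\mathbb P^2})=0$. This identifies $H^0(C_0,\mathcal O_{C_0}(d))$ with $H^0(\mathcal O_{\mathbb P^2}(d))/\langle F\rangle$, where $F$ is an equation of $C_0$. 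The cusp conditions are then cut out by the zero-dimensional scheme $Z$ defined by the Tjurina-type ideals at the cusps, of total length $2\kappa$. Surjectivity onto $\bigoplus_p T^{1,\mathrm{es}}_{C_0,p}$ follows once $H^1(\mathbb P^2,\mathcal I_Z(d))=0$, i.e.\ once the $2\kappa$ cusp conditions are independent on $|L|$. The same vanishing shows that the Zariski tangent space to $V^{\mathrm{cusp}}_{d,\kappa}$ at $[C_0]$ has dimension $\binom{d+2}{2}-1-2\kappa$. Combined with the lower bound on dimension coming from $2\kappa$ local equations, this gives smoothness and the expected dimension of the component through $[C_0]$.

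To produce $C_0$ together with the independence of conditions, I will argue by induction on $\kappa$. For small $\kappa$, take unions of cuspidal cubics $y^2z=x^3$ in general position with general lines or conics, for which the cusps are visibly independent. Then smooth the extra intersection nodes while keeping the cusps, invoking the Cuspidal Deformation Theorem at each stage (its surjectivity hypothesis is exactly the independence being propagated). To pass from $\kappa$ to $\kappa+1$, impose one more cusp at a general point of $\mathbb P^2$. I would do this using the Proposition of Section~4: first create two nodes that then collide to a cusp inside the equigeneric stratum. The aim is to show that the new length-two scheme imposes two further independent conditions, as long as $2(\kappa+1)\le\binom{d+2}{2}-1$.

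The main obstacle is this independence step, i.e.\ the vanishing of $H^1(\mathcal I_Z(d))$ throughout the whole numerical range. For the early stages I would first try a Castelnuovo-type or Horace-type specialization argument, placing some cusp points on a line and using the residual exact sequence. I expect this to be delicate near the top of the range $2\kappa\le\binom{d+2}{2}-1$. Once it is available, maximality is immediate from the last part of the Cuspidal Deformation Theorem: the number of cusps equals the rank of the global-to-local map, and no cuspidal deformation in $|L|$ can exceed it.
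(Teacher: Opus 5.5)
\textbf{There is a genuine gap: the independence step you flag cannot be carried out, because the statement is false as written.}

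A curve in $V^{\mathrm{cusp}}_{d,\kappa}$ is irreducible. Two components of a plane curve always meet, a point lying on two components has at least two branches, and a cusp is unibranch. For $d=3$ an irreducible cubic has $p_a=1$, hence at most one singular point. So $V^{\mathrm{cusp}}_{3,\kappa}=\emptyset$ for $\kappa=2,3,4$, although $2\kappa\le 9=\binom{5}{2}-1$. More generally, the class $d(d-1)-3\kappa$ of such a curve is at least $2$ by Pl\"ucker, so $\kappa\le (d^2-d-2)/3$. This is smaller than the largest value $\lfloor (d^2+3d)/4\rfloor$ allowed by the hypothesis for every $3\le d\le 13$.

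Consequently, over the stated range neither the curve $C_0$ of your step (1) nor the vanishing $H^1(\mathbb P^2,\mathcal I_Z(d))=0$ exists, and no induction on $\kappa$ can produce them. The paper's own proof does not help. It simply asserts that $\kappa$ cusps impose $2\kappa$ independent conditions ``by standard transversality arguments'', which is exactly the claim that fails. Your reduction itself is sound: unobstructedness follows from $H^1(\mathcal O_{C_0}(d))=0$, and smoothness of the expected dimension at $[C_0]$ follows from $H^1(\mathbb P^2,\mathcal I_Z(d))=0$. But it gives the conclusion only at curves where that vanishing is actually proved. The theorem therefore needs a much stronger restriction on $\kappa$ than $2\kappa\le\dim|L|$.

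\textbf{The construction also contains two invalid steps, and the maximality claim is empty.}

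First, two nodes can never collide to a cusp. The $\delta$-invariant is upper semicontinuous in flat families of reduced curves, so a limit of two nodes has $\delta\ge 2$ (e.g.\ a tacnode), whereas a cusp has $\delta=1$. Indeed, every fibre of the versal family $y^2=x^3+ax+b$ has at most one singular point. The Proposition of Section~4 therefore cannot supply your induction step. The correct move from $\kappa$ to $\kappa+1$ is to degenerate a single extra node to a cusp. That costs one more condition and again needs the independence that fails in general.

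Second, the Cuspidal Deformation Theorem of Section~3 assumes that all singularities of $C$ are ordinary cusps. So it cannot be invoked to smooth the intersection nodes of a union of cuspidal cubics and lines while keeping the cusps. That step needs its own argument, for instance independence of the cusp conditions together with the fact that no intersection node is a base point of $H^0(\mathbb P^2,\mathcal I_Z(d))$.

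Finally, maximality is not ``immediate'' from that theorem in any substantive sense. Locally it only says that small deformations of $C_0$ have at most one singular point near each cusp and none elsewhere. Globally in $|L|$, it would contradict the existence claim for $\kappa+1$ whenever $2(\kappa+1)\le\dim|L|$.
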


\noindent\textit{Proof.}
The equisingular deformation space of a cusp is one--dimensional, so $\kappa$ cusps
impose $2\kappa$ independent conditions on the linear system. When
$2\kappa\le \dim |L|$, one can impose $\kappa$ cusp conditions at general points of
$\mathbb P^2$. The corresponding incidence variety is smooth of the expected dimension
by standard transversality arguments, since cuspidality is defined by the vanishing of
the discriminant and its first derivative, which cut transversely for general choices.
Unobstructedness follows from the vanishing of $H^1(C,N_{C/\mathbb P^2})$ for plane
curves. Any further cusp would impose two more conditions and violate the dimension
bound, proving maximality. \hfill$\square$

This shows that for plane curves the maximal number of cusps is governed by the
dimension of the linear system rather than solely by the arithmetic genus.

\subsection*{$K3$ surfaces}

Let $S$ be a smooth projective $K3$ surface and let $L$ be a primitive ample line
bundle with
\[
L^2=2g-2,\qquad g\ge 2.
\]
Then $\dim |L|=g$. Every reduced curve $C\in|L|$ is semiregular, and
\[
N_{C/S}\simeq \mathcal O_C(C)\simeq \omega_C.
\]
Hence embedded deformations of $C$ are unobstructed.

Let $V^{\mathrm{cusp}}_{\kappa}(|L|)$ denote the locus of curves in $|L|$ with exactly
$\kappa$ ordinary cusps and no other singularities. The expected dimension is
\[
\operatorname{expdim} V^{\mathrm{cusp}}_{\kappa}(|L|)=g-2\kappa.
\]

\begin{theorem}
If $2\kappa\le g$, then $V^{\mathrm{cusp}}_{\kappa}(|L|)$ is nonempty and smooth of the
expected dimension at a general point. Moreover, $\kappa$ is maximal among cuspidal
deformations in $|L|$.
\end{theorem}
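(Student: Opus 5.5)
The plan is to follow the template of the plane curve case, replacing the vanishing of $H^1(C,N_{C/\mathbb P^2})$ by the K3 identification $N_{C/S}\simeq\omega_C$ together with semiregularity. First I will set up the incidence correspondence
\[
I_\kappa=\{(C,p_1,\dots,p_\kappa): C\in|L|,\ C \text{ has a cusp at each } p_i\}\subset |L|\times S^{(\kappa)}.
\]
Locally at each $p_i$, requiring a cusp means that the defining section vanishes to order two at $p_i$ (three conditions) and that its Hessian is degenerate (one condition). With $p_i$ allowed to move in a two-dimensional family, the net cost is two conditions per cusp. Thus every component of $I_\kappa$, and hence of its image $V^{\mathrm{cusp}}_\kappa(|L|)$, has dimension at least $g-2\kappa$.

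The next step is smoothness of the expected dimension at a general point. The Zariski tangent space to the equisingular locus at $[C]$ is $H^0(C,\mathcal I^{\mathrm{es}}\otimes N_{C/S})$ modulo the trivial direction, where $\mathcal I^{\mathrm{es}}$ is the equisingular ideal. At a cusp this ideal has colength $2$, namely the ideal $(x,y^2)$ in suitable coordinates. I will use $N_{C/S}\simeq\omega_C$ and Serre duality to write
\[
h^1(C,\mathcal I^{\mathrm{es}}\omega_C)=h^0(C,\mathcal Hom(\mathcal I^{\mathrm{es}},\mathcal O_C)).
\]
The sections of $\mathcal Hom(\mathcal I^{\mathrm{es}},\mathcal O_C)$ correspond to sections of $\mathcal O_C$ on a partial normalization. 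So the vanishing of this $H^1$ modulo constants reduces to the connectedness and reducedness of $C$, which holds because $L$ is primitive and ample. This yields the surjectivity of
\[
H^0(C,N_{C/S})\to\bigoplus_p T^{1,\mathrm{es}}_{C,p}
\]
in the sense required by Theorem 3.2 (the second version of the cuspidal deformation theorem), with unobstructedness supplied by semiregularity. Applying that theorem to the trivial family $\mathcal X=S\times\Delta$ then gives smoothness of dimension $g-2\kappa$ at $[C]$.

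For nonemptiness I plan to degenerate. I would first try to produce a single curve with $\kappa$ cusps for $2\kappa\le g$. One route is to specialize $(S,L)$ to an elliptic K3 with $L=E+gF$ or a similar decomposition, where curves with cusps can be written down explicitly, for example by taking cuspidal fibres $F$ as components. I would then deform back using the logarithmic version of the cuspidal deformation theorem together with the surjectivity established above. An alternative route is to start from the nodal Severi varieties of Mumford–Chen type and collide pairs of nodes, using the boundary theorem of Section 4 to land in the cuspidal locus.

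Maximality will follow from the dimension count: a $(\kappa+1)$-cuspidal curve would have to lie in a locus of dimension at least $g-2\kappa-2$ that is cut out by independent conditions. This fails once $2(\kappa+1)>g$. I expect the \textbf{main obstacle} to be the existence step, that is, actually realizing $\kappa$ cusps up to $g/2$ on a general K3. Independence of the cusp conditions and the vanishing of $H^1$ are formal once a curve exists, but existence genuinely requires a degeneration argument. I would attack it first through the elliptic K3 specialization with cuspidal fibres, checking transversality with the singular locus of the special fibre so that the logarithmic theorem applies.
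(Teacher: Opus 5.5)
Your architecture (incidence lower bound, T-smoothness via $h^1(C,\mathcal I^{\mathrm{es}}\omega_C)=1$, a degeneration for existence) is more careful than the paper's argument. The paper only asserts surjectivity of $H^0(C,\omega_C)\to\bigoplus_p T^{1,\mathrm{es}}_{C,p}$ from $\dim H^0(C,\omega_C)=g$, and it never constructs a cuspidal curve.

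\textbf{The T-smoothness step is false.} You identify $\mathcal Hom(\mathcal I^{\mathrm{es}}\mathcal O_C,\mathcal O_C)$ with the structure sheaf of a partial normalization. That is the nodal argument, and it breaks at a cusp. With $\mathcal O_{C,p}=\mathbb C\{t^2,t^3\}$ one has $\mathcal I^{\mathrm{es}}\mathcal O_{C,p}=t^3\mathbb C\{t\}$. Its $\mathcal O_{C,p}$-dual is $t^{-3}\mathfrak c=t^{-1}\mathbb C\{t\}$, where $\mathfrak c=t^2\mathbb C\{t\}$ is the conductor. Now let $C$ have only cusps $p_1,\dots,p_\kappa$, let $\nu:\widetilde C\to C$ be the normalization (of genus $g-\kappa$), and put $q_i=\nu^{-1}(p_i)$. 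Then the sheaf is $\nu_*\mathcal O_{\widetilde C}(q_1+\dots+q_\kappa)$, and
\[
\dim T_{[C]}V^{\mathrm{cusp}}_{\kappa}(|L|)=g-1-2\kappa+h^0\bigl(\widetilde C,\mathcal O_{\widetilde C}(q_1+\dots+q_\kappa)\bigr).
\]
So T-smoothness at $[C]$ is equivalent to the $q_i$ imposing independent conditions on $H^0(K_{\widetilde C})$. This forces $2\kappa\le g$. For $2\kappa\le g$, however, it is a genuine condition on the particular curve, not a consequence of $C$ being connected and reduced. It fails, for example, if $\widetilde C$ is hyperelliptic and two of the $q_i$ are conjugate. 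You can only check it on curves you have actually produced.

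\textbf{The maximality step cannot work as stated.} A lower bound $\dim\ge g-2\kappa-2$ never contradicts nonemptiness. Moreover, by the formula above, at any curve with $2\kappa>g$ the tangent space automatically has excess dimension, since $h^0(\mathcal O_{\widetilde C}(\sum q_i))\ge 2\kappa-g+1\ge 2$. In fact the maximality claim fails on special K3 surfaces. Take a smooth sextic $B$ meeting a line $\ell$ in $3a+3b$, for instance $B=x^3(x-y)^3+zH$ with $\ell=\{z=0\}$ and $H$ a general quintic. On the double plane branched along $B$, the preimage of $\ell$ lies in $|L|$ with $g=2$ and has two cusps. So you need $S$ general in moduli, together with a genuine upper bound on dimensions.

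\textbf{Existence is not reached by either route you sketch.} You rightly flag this as the main obstacle.
\begin{enumerate}
\item On an elliptic K3 with section $E$ one has $h^0(E+gF)=g+1=h^0(gF)$. So $E$ is a fixed component of $|E+gF|$, and every member is singular where $E$ meets the fibres.
\item The special surface is smooth, so the logarithmic theorem does not apply. You need a relative argument over the moduli of polarized K3 surfaces that moves $(S,L)$ off the Noether--Lefschetz divisor, smoothing the nodes on $E$ while keeping the cusps.
\item An elliptic K3 has at most $12$ cuspidal fibres, since each has Euler number $2$. So this route cannot give $\lfloor g/2\rfloor$ cusps once $g\ge 26$.
\item The alternative of colliding pairs of nodes rests on a false picture, shared with the paper's section on boundary strata. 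A cusp has $\delta=1$. The $\delta$-constant stratum of $y^2=x^3+ax+b$ is the discriminant curve $4a^3+27b^2=0$, whose general member has a \emph{single} node. Two nodes colliding equigenerically give $\delta=2$, generically a tacnode.
\end{enumerate}
What is missing is a proof that, on a general K3, the $\kappa$ one-condition degenerations ``node $\to$ cusp'' cut the $(g-\kappa)$-dimensional Severi variety of $\kappa$-nodal curves nonemptily and properly.
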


\noindent\textit{Proof.}
For an ordinary cusp $p\in C$, the equisingular deformation space
$T^{1,\mathrm{es}}_{C,p}$ is one--dimensional. Global equisingular deformations are
controlled by the map
\[
H^0(C,\omega_C)\longrightarrow \bigoplus_{p\in\mathrm{Sing}(C)} T^{1,\mathrm{es}}_{C,p}.
\]
Since $\dim H^0(C,\omega_C)=g$, surjectivity holds for $\kappa\le g/2$ at general
points. Semiregularity implies unobstructedness, so these infinitesimal equisingular
deformations integrate to actual deformations in $|L|$. Each cusp contributes one unit
to the genus drop, so $\kappa\le g$ numerically, but the equisingular dimension bound
forces $2\kappa\le g$, which is therefore sharp. \hfill$\square$

This example highlights the rigidity of cuspidal curves on $K3$ surfaces: although
the arithmetic genus allows up to $g$ cusps, the linear system allows at most $g/2$.

\subsection*{Hirzebruch surfaces}

Let $S=\mathbb F_n$ be the $n$--th Hirzebruch surface, with negative section $E$ and
fiber $F$. Let
\[
L=aE+bF,\qquad a\ge 1,\ b\gg 0.
\]
Then
\[
\dim |L|=(a+1)(b+1)-\frac{1}{2}a(a+1)n-1.
\]
The arithmetic genus of curves in $|L|$ is
\[
p_a(L)=\frac{1}{2}L\cdot(L+K_S)+1.
\]

Let $V^{\mathrm{cusp}}_{\kappa}(|L|)$ be the cuspidal Severi--type locus.

\begin{theorem}
For $b$ sufficiently large relative to $a$ and $n$, the locus
$V^{\mathrm{cusp}}_{\kappa}(|L|)$ is nonempty of expected dimension
\[
\dim |L|-2\kappa,
\]
provided $2\kappa\le \dim |L|$. Moreover, this bound is sharp.
\end{theorem}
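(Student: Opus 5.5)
The plan follows the pattern of the plane-curve and $K3$ cases: reduce to the cuspidal deformation theorem of Section 3 (with $\mathcal X$ the trivial family $S\times\Delta$) and verify its two hypotheses for $|L|$ on $\mathbb F_n$ when $b\gg0$.

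\emph{Unobstructedness.} Since $\mathbb F_n$ is rational, $H^1(\mathcal O_S)=H^2(\mathcal O_S)=0$. From
\[
0\to\mathcal O_S\to\mathcal O_S(L)\to N_{C/S}\to0
\]
we get $H^1(C,N_{C/S})\simeq H^1(S,L)$. I will show $H^1(S,aE+bF)=0$ for $b\ge an$ by pushing forward along $\mathbb F_n\to\mathbb P^1$. This gives $\pi_*\mathcal O(aE+bF)=\bigoplus_{i=0}^a\mathcal O_{\mathbb P^1}(b-in)$ with no higher direct images, and every summand has degree $\ge -1$. In particular $|L|$ is unobstructed, $H^0(C,N_{C/S})=H^0(L)/k$, and $\dim|L|$ is the stated number.

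\emph{Surjectivity and existence.} The key claim is that, for $\kappa$ general points $p_1,\dots,p_\kappa$ and $2\kappa\le\dim|L|$, the zero-dimensional schemes $Z_i$ defined by the cusp conditions (local ideal $(f,\partial_xf,\partial_yf)$-type conditions of total length $3$ at a fixed point, hence $2$ moving conditions) impose independent conditions on $H^0(L)$. For this I will use the stronger statement that $L\otimes\mathcal I_Z$ has vanishing $H^1$ for a curvilinear scheme $Z$ of bounded length. Concretely, I will choose $b$ so large that $L-K_S-\text{(ample)}$ absorbs the length. Alternatively, I can degenerate the points onto distinct fibres and use that $\mathcal O(b-in)$ is $(\text{length})$-very ample on $\mathbb P^1$ once $b-an\ge 3\kappa$. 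Once this independence holds, the incidence variety $\{(C,p_1,\dots,p_\kappa)\}$ is smooth of dimension $\dim|L|-2\kappa$ and projects generically finitely to $|L|$. This makes the restriction map $H^0(C,N_{C/S})\to\bigoplus T^{1,\mathrm{es}}_{C,p}$ surjective at a general member.

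To show that a general member has ordinary cusps and no other singularities, I will use a Bertini/dimension count. Worse singularities (e.g.\ $A_3$, or singular points off the $p_i$) impose at least one further independent condition by the same $H^1$-vanishing, so they occur in a proper closed subset. Exhibiting one member with an honest ordinary cusp at each $p_i$ is done locally, since the $3$-jet map is surjective when $b$ is large. The cuspidal theorem of Section 3 then gives the component of expected dimension. Sharpness follows as in the plane case: a $(\kappa+1)$-st cusp would cost $2$ further independent conditions, so the expected dimension becomes negative once $2(\kappa+1)>\dim|L|$, and independence forces emptiness of that generic component.

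The main obstacle is the independence of conditions in the range $2\kappa\le\dim|L|$. For $\kappa$ close to $\dim|L|/2$ the jet-separation argument via large $b$ only controls $\kappa$ up to roughly $b/3$, not all the way to $\dim|L|/2\approx ab/2$. I would first try the fibrewise degeneration: distribute the cusps over several fibres and use the splitting of $\pi_*L$ to check independence summand by summand. Failing that, I would restrict the claimed range accordingly. I expect this is where genuine Hirzebruch-specific input (e.g.\ results in the style of Greuel–Lossen–Shustin on $T$-smoothness) must replace the formal count.
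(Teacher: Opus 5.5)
Your outline matches the paper's: vanishing of $H^1(S,L)$, two independent conditions per cusp at general points, a smooth incidence variety of dimension $\dim|L|-2\kappa$, and sharpness by one more count. Your unobstructedness step via $\pi_*\mathcal O(aE+bF)=\bigoplus_{i=0}^a\mathcal O_{\mathbb P^1}(b-in)$ is correct. The step you flag as the main obstacle is a genuine gap, and no Hirzebruch-specific input can close it, because the statement is false in the range $2\kappa\le\dim|L|$.

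\emph{Case $a=1$.} Every member of $|E+bF|$ is a section plus fibres. Since $L\cdot F=1$, sections are smooth and meet fibres transversally, so reduced members have only nodes. Hence $V^{\mathrm{cusp}}_1(|L|)=\emptyset$, although $\dim|L|=2b-n+1\ge2$ for $b\gg0$.

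\emph{Case $a\ge2$, $b\gg0$.} A reduced member with only cusps is irreducible: cusps are unibranch, so distinct components would have to be disjoint, which is impossible here. Hence $\kappa\le p_a(L)=(a-1)(b-1-\tfrac{an}{2})$. For $a=2$ this is $b-n-1$, far below $\tfrac12\dim|L|=\tfrac12(3b-3n+2)$. Riemann--Hurwitz for the degree-$a$ map $\widetilde C\to\mathbb P^1$, in which every cusp is a ramification point, gives even $3\kappa\le 2p_a(L)+2a-2=(a-1)(2b-an)$. This is smaller than $\lfloor\tfrac12\dim|L|\rfloor$ for every $a\le 7$ once $b\gg0$.

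Two of your intermediate claims fail for related reasons.
\begin{itemize}
\item The $3$-jet map is not surjective for $b\gg0$. On every fibre $L|_F\cong\mathcal O_{\mathbb P^1}(a)$, so jets in the fibre direction of order $>a$ are never separated, however large $b$ is. For $a=1$ local equations are linear in the fibre coordinate, which is exactly why cusps never occur.
\item Sharpness does not follow from negativity of the expected dimension. Emptiness would need the $H^1$-vanishing at a hypothetical $(\kappa+1)$-cuspidal curve, whose singular points are not general. By the bounds above, the true maximum is in fact strictly below $\lfloor\tfrac12\dim|L|\rfloor$ for small $a$.
\end{itemize}
Your local count is also off. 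Singularity at a fixed point is $3$ linear conditions, i.e.\ $1$ condition once the point moves; that is the node count. A cusp with fixed point and tangent is the length-$5$ scheme $(y^2,x^2y,x^3)$, giving $5-3=2$.

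The paper's own proof does not supply the missing ingredient. It simply asserts that cusp conditions are independent ``as in the plane case'' and that the incidence correspondence is smooth of expected dimension, which is precisely the unjustified step. What your method can honestly deliver is the restricted statement you anticipate: exclude $a=1$, and limit $\kappa$ to a range where $H^1(S,\mathcal I_Z\otimes L)=0$ for the equisingular schemes is actually proved. That could be the range your fibrewise argument gives, or a Greuel--Lossen--Shustin type $T$-smoothness criterion. Either way, it is a weaker theorem than the one stated.
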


\noindent\textit{Proof.}
For $b\gg 0$, the line bundle $L$ is very ample and $H^1(C,N_{C/S})=0$ for all reduced
$C\in|L|$. Hence embedded deformations are unobstructed. Cuspidality at a point imposes
two independent conditions, as in the plane case. By choosing $\kappa$ general points
and imposing cusp conditions, one obtains a smooth incidence correspondence of the
expected dimension. Any additional cusp would require two further conditions, which
cannot be satisfied once $2\kappa=\dim |L|$. Thus the bound is sharp. \hfill$\square$

\subsection*{\it Conclusion}
Across these examples, cuspidal Severi--type loci behave uniformly: each cusp imposes
two conditions, and maximal cuspidality is governed by the dimension of the linear
system rather than solely by the arithmetic genus. On $K3$ surfaces this rigidity is
particularly striking, while on rational surfaces and the plane it aligns with
classical expectations. These computations confirm the sharpness of the cuspidal
bounds predicted by equisingular deformation theory.
 
%%%%%%%%%%%%%%%%%%%%%%%%%%%%%%%%%%%%%%%%%%%%%%%%%%%%%%%%%%%%%%%%%%%%
%%%%%%%%%%%%%%%%%%%%%%%%%%%%%%%%%%%%%%%%%%%%%%%%%%%%%%%%%%%%%%%%%%%%

%\vspace{5cm} 

\section{Cuspidal Conditions, Severi Degrees, and Enumerative Geometry}

We explain how the deformation--theoretic results on cuspidal curves fit naturally
into enumerative geometry, how cuspidal conditions modify classical Severi degrees,
and why logarithmic and tropical methods provide the correct framework for computing
such refined counts.

Let $S$ be a smooth projective surface and $L$ a line bundle on $S$. For $\delta\ge0$,
the classical Severi variety $V_{\delta}(|L|)$ parametrizes curves in $|L|$ with
exactly $\delta$ ordinary nodes and no other singularities. When this variety has the
expected dimension
\[
\dim |L|-\delta,
\]
its degree defines the Severi degree $N_{\delta}(L)$, which counts $\delta$--nodal
curves passing through the appropriate number of general points. The foundational
fact underlying this definition is that nodal curves form a dense open subset of the
equigeneric locus, so that nodal conditions are enumeratively stable.

Cuspidal curves behave fundamentally differently. An ordinary cusp has the same
$\delta$--invariant as a node but imposes two local conditions rather than one.
Accordingly, the natural enumerative problem is not to replace nodes by cusps, but to
refine the Severi problem by prescribing a mixture of nodes and cusps.

For integers $\delta,\kappa\ge0$, let $V_{\delta,\kappa}(|L|)$ denote the locus of curves
in $|L|$ with exactly $\delta$ nodes, $\kappa$ ordinary cusps, and no other
singularities. The expected dimension is
\[
\operatorname{expdim} V_{\delta,\kappa}(|L|)=\dim |L|-\delta-2\kappa.
\]
This formula reflects precisely the deformation--theoretic fact that a node consumes
one smoothing direction while a cusp consumes two.

\begin{proposition}
Assume that equigeneric deformations of curves in $|L|$ are unobstructed and that
cuspidal equisingular deformations are unobstructed in the sense of logarithmic or
classical semiregularity. Then $V_{\delta,\kappa}(|L|)$ is either empty or has a
component of the expected dimension, and its degree defines an enumerative invariant.
\end{proposition}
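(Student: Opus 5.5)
The proof will follow the same pattern as the cuspidal theorems of Section~3, with the local target replaced by the mixed equisingular space for nodes and cusps. Let $C\in V_{\delta,\kappa}(|L|)$ be a point of the locus, with nodes $p_1,\dots,p_\delta$ and cusps $q_1,\dots,q_\kappa$. The equisingular deformations of $C$ in $|L|$ are those whose image in $\bigoplus_i T^1_{C,p_i}\oplus\bigoplus_j T^1_{C,q_j}$ lies in
\[
\bigoplus_i T^{1,\mathrm{es}}_{C,p_i}\oplus\bigoplus_j T^{1,\mathrm{es}}_{C,q_j}.
\]
For a node, $T^1$ is one-dimensional and $T^{1,\mathrm{es}}=0$, so each node imposes one condition. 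For a cusp, $\dim T^1=2$ and $\dim T^{1,\mathrm{es}}=1$. Each cusp therefore imposes two conditions once the position of the singular point is allowed to vary, exactly as in the dimension counts of Section~5. The Zariski tangent space to $V_{\delta,\kappa}(|L|)$ at $[C]$ is then the preimage of this equisingular subspace under
\[
H^0(C,N_{C/S})\to\bigoplus T^1,
\]
taken modulo the trivial deformation. Its codimension in $T_{[C]}|L|$ is therefore at most $\delta+2\kappa$, with equality exactly when the global-to-local map to $\bigoplus T^1/T^{1,\mathrm{es}}$ is surjective.

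The second step uses the unobstructedness hypotheses. By the argument in the proofs of the two cuspidal theorems of Section~3, classical or logarithmic semiregularity kills the obstructions in $H^1(C,N_{C/S})$, respectively $H^1(C,N^{\log})$. Equisingular first-order deformations then integrate, so the equisingular deformation functor is smooth. Consequently $V_{\delta,\kappa}(|L|)$ is smooth at $[C]$ of dimension $\dim|L|-(\delta+2\kappa)+\dim\operatorname{coker}$.

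To obtain a component of exactly the expected dimension, I would combine two facts. The first is the lower bound $\dim\ge\dim|L|-\delta-2\kappa$, which holds at every point because the locus is cut out locally by $\delta+2\kappa$ equations pulled back from the versal deformations of the germs. The second is the upper bound supplied by the tangent space computation once the cokernel vanishes. The cokernel does vanish under the hypothesis that equigeneric deformations are unobstructed: the equigeneric map is surjective onto the nodal directions, and the cuspidal directions are handled exactly as in the first theorem of Section~3.

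Finally, the degree is defined by intersecting the closure of this component with $\dim|L|-\delta-2\kappa$ general point conditions, which are hyperplanes in $|L|$. By Bertini and the smoothness already established, the intersection is transverse and lies in the open stratum. It stays away from the boundary strata, such as colliding nodes or the $A_3$ and $A_4$ degenerations described in Section~4, because these have strictly smaller dimension. The resulting count is independent of the general choices, so it is an enumerative invariant.

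The main obstacle is the cokernel-vanishing step, that is, showing that nodes and cusps impose independent conditions globally. The hypotheses as stated give only unobstructedness and not surjectivity. If that step fails, I would fall back on the conclusion ``either empty or of the expected dimension'' by working with a general point of a component. There I would use the local-equation lower bound together with a dimension estimate from the incidence correspondence of singular points with curves, as in the plane-curve argument of Section~5.
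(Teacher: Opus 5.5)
\textbf{Verdict: there is a genuine gap.} The cokernel-vanishing step cannot be derived from the hypotheses. There is also a separate inconsistency in your local count.

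\textbf{The upper bound is missing.} Your lower bound is correct. Near $[C]$ the locus is the preimage of the origin under a map from a smooth germ to $\prod_p\mathrm{Def}(C,p)$, which is smooth of dimension $\delta+2\kappa$. Hence every component has dimension at least $\dim|L|-\delta-2\kappa$. Your Bertini argument for the count is also fine once a component of the expected dimension is in hand. The problem is the upper bound, i.e.\ the claim that the cokernel vanishes.

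In the first theorem of Section~3, surjectivity is an assumption, not a consequence. Unobstructedness, meaning vanishing of $H^1(C,N_{C/S})$ or injectivity of the classical or logarithmic semiregularity map, does not control this cokernel. For the map from $H^0(S,L)$, the cokernel injects into $H^1(S,\mathcal J_Z\otimes L)$. Here $Z$ is the length-$(\delta+2\kappa)$ scheme defined by the Tjurina ideals at the nodes and cusps, and this group is not governed by $H^1(C,N_{C/S})$.

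A concrete failure: every reduced plane curve has $H^1(C,N_{C/\mathbb P^2})=0$ and is trivially semiregular. Yet the curves $f_{2k}^3+f_{3k}^2=0$ have degree $6k$ and $6k^2$ ordinary cusps at $\{f_{2k}=f_{3k}=0\}$. They form a family of dimension $\binom{2k+2}{2}+\binom{3k+2}{2}-1$. This exceeds $\dim|L|-2\kappa=6k^2+9k$ by $(k-1)(k-2)/2$, so for $k\ge 3$ the cokernel is nonzero at every member of the family.

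Three consequences for your argument follow:
\begin{itemize}
\item Your claim that $V_{\delta,\kappa}(|L|)$ is smooth at $[C]$ of dimension $\dim|L|-\delta-2\kappa+\dim\operatorname{coker}$ does not follow. That number is only the dimension of the Zariski tangent space. When the differential of the map to $\prod_p\mathrm{Def}(C,p)$ is not surjective, the fibre need not be smooth.
\item Your incidence-correspondence fallback needs the node and cusp conditions at the chosen points to be independent. That is the same missing input.
\item What the argument needs is an extra T-smoothness hypothesis, for instance $H^1(S,\mathcal J_Z\otimes L)=0$ at some point of the locus, or a surjectivity assumption as in Section~3.
\end{itemize}
The paper's own proof does not supply this step either. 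It asserts that unobstructedness makes the local conditions glue to spaces of the expected dimension and then invokes transversality, so following it would not close your gap.

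\textbf{Your count is internally inconsistent.} You take $\dim T^1=2$ and $\dim T^{1,\mathrm{es}}=1$ at a cusp. Then $\bigoplus T^1/T^{1,\mathrm{es}}$ has dimension $\delta+\kappa$, and your tangent-space argument yields codimension at most $\delta+\kappa$, not $\delta+2\kappa$. Letting the singular point move lowers the number of conditions: a cusp at a fixed point imposes four, a moving one two. So that remark cannot bridge the discrepancy.

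The correct local input comes from the semiuniversal deformation $y^2=x^3+ax+b$. Every singular member with $(a,b)\neq(0,0)$ has only a node. So the equisingular stratum of the cusp is the origin and $T^{1,\mathrm{es}}_{C,q}=0$. The only natural one-dimensional object here is the tangent line $\{b=0\}$ of the discriminant $4a^3+27b^2=0$. That discriminant is the equigeneric locus, not the equisingular one. With $\dim T^1_{C,q}/T^{1,\mathrm{es}}_{C,q}=2$, your bound $\delta+2\kappa$ and the expected dimension become consistent.
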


\noindent\textit{Proof.}
Unobstructedness ensures that the local deformation conditions defining nodes and
cusps glue to global deformation spaces of the expected dimension. The equisingular
condition at a cusp cuts out a smooth divisor inside the equigeneric locus, while
nodal conditions define smooth hypersurfaces. Transversality of these conditions for
general point constraints implies that $V_{\delta,\kappa}(|L|)$ is generically smooth
of the expected dimension. When this dimension is zero, its degree is well defined and
independent of choices, yielding an enumerative invariant. \hfill$\square$

From an enumerative perspective, cuspidal Severi degrees can therefore be interpreted
as refined Severi degrees that weight configurations where nodes collide to form
cusps. This interpretation becomes precise through degeneration techniques.

Consider a degeneration $\pi:\mathcal X\to\Delta$ of surfaces, logarithmically smooth
with normal crossings special fiber $\mathcal X_0$. Classical deformation theory
fails to control singular curves on $\mathcal X_0$, but logarithmic geometry restores
a clean enumerative picture. Logarithmic stable maps and logarithmic Severi varieties
parametrize curves with prescribed tangency and singularity data relative to the
boundary.

In this setting, cuspidal conditions arise naturally as boundary strata of logarithmic
Severi varieties. Logarithmic semiregularity ensures that these strata contribute
honestly to enumerative counts rather than appearing with excess multiplicities.

\begin{theorem}
Assume logarithmic semiregularity for all curves contributing to the count. Then the
cuspidal Severi degree equals the sum of logarithmic contributions from boundary strata
corresponding to node collisions in the degeneration.
\end{theorem}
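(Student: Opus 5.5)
The plan is a degeneration argument: compare the cuspidal Severi degree on a general fiber $\mathcal X_t$ with a sum of contributions from curves on the special fiber $\mathcal X_0$, in three steps.

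\emph{Step 1: set up the cycle.} Fix general point conditions, specialized compatibly to $\mathcal X_0$ (distributed among its components). Consider the relative logarithmic Severi-type scheme
\[
\mathcal V_{\delta,\kappa}\longrightarrow\Delta
\]
of curves with $\delta$ nodes and $\kappa$ cusps through these points. By the Proposition above, the fiber over $t\neq0$ has the expected dimension zero, and its degree is the cuspidal Severi degree. The cuspidal loci of interest are boundary strata of the equigeneric Severi strata, as shown in the previous section. This lets me take the closure of the cuspidal locus inside the closure of the relative nodal Severi variety.

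\emph{Step 2: describe the limits.} Apply the theorem on equigeneric families above, which says that cusps arise only as limits of colliding nodes. Then every limit point of $\mathcal V_{\delta,\kappa}$ over $0$ lies in a boundary stratum indexed by a collision pattern of nodes. The relevant data are the degeneration type of the curve across the double locus of $\mathcal X_0$, together with the tangency orders along it.

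\emph{Step 3: show each stratum contributes honestly.} This step uses logarithmic semiregularity. By the argument in the proof of the logarithmic cuspidal deformation theorem, the extension class of
\[
0\to N^{\log}_{C/\mathcal X_0}\to N^{\log}_{C/\mathcal X}\to\mathcal O_C\to0
\]
dies, so logarithmic deformations of each limit curve $C$ lift to $\mathcal X$. Combined with surjectivity onto $\bigoplus_p T^{1,\mathrm{es}}_{C,p}$, this shows that $\mathcal V_{\delta,\kappa}$ is smooth over the log point near $[C]$. Hence each limit curve deforms to exactly the predicted number of curves on $\mathcal X_t$, namely the log multiplicity given by the product of tangency orders. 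This comes with no excess multiplicity. Conservation of number for the flat proper family $\mathcal V_{\delta,\kappa}\to\Delta$ then expresses the degree on $\mathcal X_t$ as the sum of these boundary contributions.

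I expect Step 3 to be the main obstacle, and in particular properness and flatness near strata where a node collision happens \emph{on} the double locus of $\mathcal X_0$. There, transversality of $C$ to $\mathrm{Sing}(\mathcal X_0)$ fails, and $N^{\log}$ is not obviously the right sheaf. My first attempt would be to perform a log modification (blowing up the collision points in the double locus) so that the limit curves again meet the singular locus transversely. I would then check that semiregularity is preserved under this base change. If it is not, I would simply add as a hypothesis that no collisions occur on the double locus.
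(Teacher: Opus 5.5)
Your strategy (specialize the count to $\mathcal X_0$, show that each limit log curve lifts with a definite multiplicity, and conclude by conservation of number) is a hands-on version of what the paper does in one line: it invokes the logarithmic degeneration formula and uses semiregularity to identify the virtual class with the fundamental class. But Step 2 does not go through. The theorem on equigeneric families you cite concerns curves on a \emph{fixed} smooth surface $S$ whose general member is nodal. Your family $\mathcal V_{\delta,\kappa}\to\Delta$ has a degenerating ambient surface and a general member that is already cuspidal, so that theorem says nothing about its limit points over $0$. That limit analysis is the crux of the argument. Using the general point conditions, you must exclude by dimension counts: limits in which some of the $\delta+\kappa$ singular points of $C_t$ specialize into $\mathrm{Sing}(\mathcal X_0)$; limits with components in the double locus or non-reduced components; limits that are not predeformable along the double locus; and limits in which singular points coalesce. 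This is exactly the obstacle you flag at the end, and declaring ``no collisions on the double locus'' a hypothesis changes the theorem rather than proving it. Moreover, ``collision pattern of nodes'' cannot be the right indexing. By upper semicontinuity of $\delta$, two nodes merging at a point produce a singularity with $\delta\ge 2$ (generically a tacnode), never an ordinary cusp. A cusp has $\delta=1$, and its equigeneric deformations $y^2=x^3+ax+b$ with $4a^3+27b^2=0$, $(a,b)\neq(0,0)$, are one-nodal. The paper's proof makes the same identification (``two smoothing parameters coincide''), so this is a flaw in the framing of the statement itself. Note that the contributions your Step 3 actually computes are indexed by limit log curves and their contact data, as in the paper's ``logarithmic types'', not by node collisions.

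Step 3 has a second gap. You use surjectivity of $H^0(C,N^{\log}_{C/\mathcal X_0})\to\bigoplus_p T^{1,\mathrm{es}}_{C,p}$ at every limit curve, but this is not among the hypotheses of the statement. Logarithmic semiregularity only makes the log embedded deformations of $C$ unobstructed, i.e.\ the relative log Hilbert scheme is smooth over the base at $[C]$. It does not imply that the $\delta+2\kappa$ equisingular conditions and the point conditions cut $\mathcal V_{\delta,\kappa}$ out transversally at $[C]$. That transversality is precisely what ``smooth over the log point, no excess multiplicity'' requires. The paper covers this point with the unproved assertion that semiregularity makes the virtual class equal to the fundamental class, which has the same gap. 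Your version at least isolates the needed condition. As a proof of the stated theorem, however, it must either add this transversality at all limit curves as a hypothesis or derive it, and it does neither.
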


\noindent\textit{Proof.}
The logarithmic obstruction theory of the logarithmic Severi variety is governed by
the logarithmic normal bundle. Semiregularity implies that the virtual fundamental
class coincides with the ordinary fundamental class. In the logarithmic degeneration
formula, contributions are indexed by logarithmic types encoding how nodes and cusps
distribute among the components of $\mathcal X_0$. A cusp corresponds precisely to a
logarithmic type where two smoothing parameters coincide. Summing over all such types
gives the total cuspidal Severi degree. \hfill$\square$

Tropical geometry provides a combinatorial counterpart to this picture. Under
tropicalization, a nodal curve corresponds to a tropical curve whose bounded edges
encode nodes. A cusp arises when two bounded edges collapse into a single vertex of
higher valence. The codimension--two condition defining a cusp is reflected in the
tropical moduli space as the condition that two edge lengths vanish simultaneously.

\begin{proposition}
Cuspidal tropical curves arise as boundary points of the tropical Severi variety, and
their multiplicities coincide with the contributions of cuspidal logarithmic types in
the degeneration formula.
\end{proposition}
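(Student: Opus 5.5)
The plan is to compare two descriptions of the same boundary stratum of the tropical Severi variety, one tropical and one logarithmic, and to match them through Nishinou--Siebert type correspondence, taking the preceding theorem (the logarithmic decomposition of the cuspidal Severi degree) as given.

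\textbf{Tropical boundary.} First I would make precise what a cuspidal tropical curve is. Consider the parameter space of parametrized tropical curves of the given degree in $\mathbb R^2$, or in the tropical fan of the toric degeneration $\mathcal X\to\Delta$. The tropical Severi variety is the closure of the open cells indexed by trivalent, $\delta$-nodal combinatorial types. A cuspidal type is one in which two bounded edges carrying the smoothing data of two nodes both have length zero, so that they collapse to a single vertex of higher valence. Because these edge lengths are affine coordinates on the cone of the type, the cuspidal locus is the face of codimension two cut out by $\ell_{e_1}=\ell_{e_2}=0$. It is therefore contained in the closure of the nodal cells, and this gives the first assertion. This matches the codimension count $\dim|L|-\delta-2\kappa$ from the expected dimension formula.

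\textbf{Matching with logarithmic types.} For the multiplicity statement, I would use the degeneration picture of the previous theorem. Each logarithmic type in the degeneration formula has a dual tropical curve, obtained by recording the components of $\mathcal X_0$ met, the contact orders, and the smoothing parameters of the nodes as edge lengths. Under this dictionary, the logarithmic type ``two smoothing parameters coincide'' corresponds to the tropical face above. The multiplicity of a tropical type is defined as a product of local vertex multiplicities, namely lattice indices. On the logarithmic side, the contribution of a type is the degree of the corresponding stratum, which factors as a product of:
\begin{itemize}
\item local counts of curves on the components with prescribed contact, and
\item a gluing factor coming from the logarithmic structure at the double locus.
\end{itemize}
Logarithmic semiregularity makes the virtual class equal to the fundamental class, so these contributions are honest counts of points. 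I would then check factor by factor that the gluing factor equals the lattice index at the higher-valent vertex, and that the local counts agree with the vertex multiplicities.

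\textbf{Main obstacle.} The hardest step is the local computation at the collapsed vertex, where an ordinary cusp replaces two nodes. There I would compute the number of log curves on the toric component with the prescribed contact orders that have a single cusp. My first attempt would be to deform to the nodal situation: using the earlier proposition that a $\delta_p=2$ singularity deforms equigenerically to two nodes, with the cusp on the boundary, I would obtain the cuspidal count as a limit of the count for the two adjacent trivalent vertices. I would then verify that this count equals the lattice-index product attached to the merged vertex, with no excess contribution. That absence of excess is exactly what semiregularity should guarantee, and it is where the argument could fail if the cusp locus is not reduced.
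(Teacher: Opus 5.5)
Your outline follows the paper's own sketch: stratify by combinatorial types, read the cusp off a face where bounded edges contract to a higher-valent vertex, and match a product of local multiplicities with logarithmic gluing factors. However, the step you flag as the main obstacle cannot be carried out. It rests on the premise that an ordinary cusp is a collision of two nodes, and that premise is false. An ordinary cusp has $\delta=1$, and $\delta$ is upper semicontinuous in flat families of reduced curves, so two nodes (total $\delta=2$) can never converge to a cusp. Concretely, in the versal family $y^2=x^3+ax+b$ the general member is smooth. The equigeneric locus is the discriminant curve $4a^3+27b^2=0$, and its members other than the central one have exactly one node. Two colliding nodes typically produce a tacnode $A_3$, which has $\delta=2$. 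So the earlier proposition you invoke, which concerns germs with $\delta_p=2$, says nothing about a germ that is itself an ordinary cusp. The cuspidal vertex count therefore cannot be obtained as a limit of two-node counts at adjacent trivalent vertices. Your own dimension check exposes the inconsistency. If the cusp replaces the two nodes carried by $e_1,e_2$, the relevant locus is $V_{\delta-2,1}(|L|)$, of expected dimension $\dim|L|-(\delta-2)-2=\dim|L|-\delta$. That is codimension zero, not two, relative to the $\delta$-nodal locus. The correct picture (a cusp is a degenerate node) places $V_{\delta-1,1}(|L|)$ in codimension one. Hence the face $\ell_{e_1}=\ell_{e_2}=0$ is not the cuspidal locus, and ``two smoothing parameters coincide'' (a single equation) does not match it either.

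The tropical dictionary has a parallel problem. In the Mikhalkin and Nishinou--Siebert correspondence, bounded edges of the parametrizing graph record nodes of the degenerate domain curve that are smoothed in the family (locally $uv=t^{\ell}$). For an edge of weight one, this node is simply smoothed and $C_t$ is nonsingular there. So the nodes of $C_t$ are not in bijection with bounded edges, and shrinking edge lengths merges components of the central fibre rather than making nodes of $C_t$ collide.

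What is genuinely missing is a local analysis that neither your plan nor the paper supplies:
\begin{enumerate}
\item define cuspidal tropical curves as the actual tropical limits of curves with $\delta-1$ nodes and one cusp through generic points;
\item count, on the corresponding toric component, the logarithmic curves with the prescribed contact orders that carry a cusp;
\item show that the cusp condition is transverse there, so there is no excess contribution.
\end{enumerate}
Logarithmic semiregularity controls obstructions to embedded deformations, but it does not give transversality of the equisingular cusp condition. Nothing guarantees that the resulting count is a product of lattice indices. The paper's proof simply asserts the equality of multiplicities, so it does not close this gap either.
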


\noindent\textit{Proof.}
In the tropical moduli space, the Severi locus is stratified by combinatorial types.
Generic points correspond to trivalent graphs, representing nodal curves. A cusp
occurs when two nodes collide, which tropically corresponds to contracting an edge and
increasing vertex valence. The tropical multiplicity of such a configuration equals
the product of local multiplicities associated to the contracted edges. This matches
the logarithmic multiplicity obtained from gluing conditions in the logarithmic
degeneration formula, establishing equality of contributions. \hfill$\square$

As a consequence, cuspidal Severi degrees admit purely combinatorial descriptions in
toric or toric--degenerate settings. They can be computed by counting tropical curves
with specified higher--valence vertices, weighted by refined multiplicities. In
refined curve counting theories, such as refined Severi degrees or motivic invariants,
cuspidal contributions correspond to higher--order terms in the refinement parameter,
reflecting the increased rigidity of cuspidal conditions.

\noindent {\it Conclusion}
From the enumerative viewpoint, cusps are not competitors to nodes but refinements of
nodal geometry. Cuspidal curves appear as boundary contributions in equigeneric Severi
strata, both geometrically and combinatorially. Logarithmic deformation theory ensures
that these boundary contributions are enumerative, while tropical geometry provides
effective tools for computing them. Together, these methods extend classical Severi
theory to a richer enumerative framework in which cusps play a natural and computable
role.

%%%%%%%%%%%%%%%%%%%%%%%%%%%%%%%%%%%%%%%%%%%%%%%%%%%%%%%%%%%%%%%%%%%%
%%%%%%%%%%%%%%%%%%%%%%%%%%%%%%%%%%%%%%%%%%%%%%%%%%%%%%%%%%%%%%%%%%%%
%\vspace{3cm}

\section{Nodes and Cusps in Tropical Geometry: A Detailed Explanation}

We explain in detail the statement that, in tropical geometry, a nodal algebraic curve
corresponds to a tropical curve whose bounded edges encode nodes, and that cusps arise
when two bounded edges collapse into a single vertex of higher valence.

Let $S$ be a toric surface and let $C\subset S$ be a reduced algebraic curve defined by
a Laurent polynomial with Newton polygon $\Delta$. The tropicalization of $C$ is a
balanced weighted polyhedral graph $\Gamma\subset\mathbb R^2$ whose combinatorial type
encodes the leading--order behavior of $C$ under a toric degeneration. Vertices of
$\Gamma$ correspond to irreducible components of the limit curve, while edges
correspond to nodes or intersections between components.

A tropical curve $\Gamma$ is said to be trivalent if every vertex has valence three.
This is the generic situation and corresponds to a stable nodal algebraic curve.
Indeed, in a toric degeneration, a trivalent vertex corresponds to a smooth rational
component of the limit curve meeting three other components transversely. Each bounded
edge of $\Gamma$ corresponds to a node of the algebraic curve obtained by smoothing
the degeneration. The length of a bounded edge measures the logarithm of the gluing
parameter at the corresponding node: positive length corresponds to a genuine node,
while shrinking length corresponds to degeneration of the node.

More concretely, consider a one--parameter family of curves $C_t$ degenerating to a
stable curve $C_0$ with nodes. Locally near a node, the family is analytically given by
\[
xy=t,
\]
where $t$ is the deformation parameter. Tropicalizing this family amounts to taking
the valuation of $t$, which produces a bounded edge in the tropical curve whose length
is proportional to $-\log|t|$. Thus each bounded edge of $\Gamma$ encodes exactly one
node of $C_t$, and varying its length corresponds to smoothing or degenerating that
node.

Now consider a cusp, analytically given by
\[
y^2=x^3.
\]
A cusp can be viewed as a degeneration of two nodes coming together. Indeed, the
versal deformation of a cusp contains a locus where the singularity splits into two
nodes. Analytically, this is reflected in the deformation
\[
y^2=x^3+ax+b,
\]
where for general $(a,b)$ the curve has two distinct nodes, while along a codimension
one locus in the parameter space these two nodes collide to form a cusp.

In tropical terms, this collision is seen as follows. A curve with two nearby nodes
corresponds to a tropical curve with two bounded edges of small but positive length.
As the parameters approach the cuspidal locus, both node--smoothing parameters tend to
zero at the same rate. Tropicalizing this situation forces both bounded edges to shrink
simultaneously. In the limit, the two edges contract and merge at a single vertex,
producing a vertex of valence four.

This higher--valence vertex is the tropical signature of a cusp. It reflects the fact
that two independent smoothing parameters have vanished simultaneously, producing a
singularity that is more rigid than a node. The balancing condition at the vertex
ensures that the tropical curve remains a valid tropicalization, while the increased
valence records the increased complexity of the singularity.

Thus, while trivalent vertices and bounded edges correspond to stable nodal behavior,
higher--valence vertices arise precisely at boundary points of the tropical Severi
variety where nodes collide. The combinatorial transition from two bounded edges to a
single higher--valence vertex is the tropical incarnation of the geometric process by
which cusps arise as limits of pairs of nodes in equigeneric families.

This correspondence is fundamental for enumerative applications. Counting nodal curves
tropically amounts to counting trivalent tropical curves, while counting cuspidal
curves requires understanding the boundary strata of the tropical moduli space where
higher--valence vertices appear, together with the appropriate multiplicities
associated to these degenerations.

%%%%%%%%%%%%%%%%%%%%%%%%%%%%%%%%%%%%%%%%%%%%%%%%%%%%%%%%%%%%%%%%%%
%%%%%%%%%%%%%%%%%%%%%%%%%%%%%%%%%%%%%%%%%%%%%%%%%%%%%%%%%%%%%%%%%%

%%%%%%%%%%%%%%%%%%%%%%%%%%%%%%%%%%%%%%%%%%%%%%%%%%%%%%%%%%%%%%%%%%%%%
%%%%%%%%%%%%%%%%%%%%%%%%%%%%%%%%%%%%%%%%%%%%%%%%%%%%%%%%%%%%%%%%%%%%%

\section{Nodal surface deformations in a threefold}

Replacing the surface $\mathcal X_0$ by a threefold and the curve $C$ by a surface
fundamentally changes both the deformation theory and the nature of the singularities
that play the role of nodes. In this setting, the correct analogue of a nodal curve is
a surface with ordinary double point singularities, also called nodes or $A_1$
singularities in dimension two.

Let $\pi:\mathcal X\to\Delta$ be a flat family whose general fiber $\mathcal X_t$ is a
smooth threefold and whose special fiber $\mathcal X_0$ is possibly singular. Let
$S\subset\mathcal X_0$ be a reduced surface, assumed to be a Cartier divisor or, more
generally, a local complete intersection. The problem is to understand when $S$ can be
deformed to a surface $S_t\subset\mathcal X_t$ whose singularities are ordinary double
points, and to determine the maximal number of such singularities that may appear on
$S_t$.

An ordinary double point on a surface is analytically given by
\[
x^2+y^2+z^2=0
\]
inside a smooth threefold. This is the simplest isolated surface singularity and is
the natural higher--dimensional analogue of a node on a curve. As in the curve case,
ordinary double points are stable under deformation and impose the minimal number of
conditions among isolated surface singularities.

The deformation theory of $S$ inside $\mathcal X_0$ is governed by the normal sheaf
$N_{S/\mathcal X_0}$. Infinitesimal embedded deformations are parametrized by
$H^0(S,N_{S/\mathcal X_0})$, while obstructions lie in $H^1(S,N_{S/\mathcal X_0})$.
Unobstructedness, for instance the vanishing of $H^1(S,N_{S/\mathcal X_0})$, plays the
same conceptual role as in the curve--on--surface case, but is substantially more
delicate because $S$ has dimension two.

The numerical invariant controlling the problem is again the defect between
topological invariants of $S$ and its normalization. For a surface with isolated
singularities, each ordinary double point contributes one to the second Betti number
defect, and this defect is preserved in flat families. This provides a universal upper
bound on the number of nodes that can appear on any deformation of $S$.

Unlike curves, however, surfaces do not admit a simple $\delta$--invariant with the
same geometric meaning. Instead, one works with invariants such as the Euler
characteristic, the geometric genus $p_g$, or the defect of the Hodge number $h^{1,1}$
under normalization. Ordinary double points are precisely the singularities that
preserve these invariants in the mildest possible way.

The correct analogue of the nodal deformation problem is therefore the following.
One seeks conditions ensuring that $S$ admits deformations inside $\mathcal X$ whose
general fiber $S_t$ has only ordinary double points, and that the number of such points
is maximal among all deformations of $S$ inside the family. Maximality is governed not
only by numerical invariants, but also by global deformation constraints encoded in
$H^0(S,N_{S/\mathcal X_0})$.

A key conceptual difference from the curve case is that ordinary double points on
surfaces are codimension--three phenomena in the ambient linear system, rather than
codimension--one. As a result, generic deformations of $S$ are smooth, and nodal
surfaces appear only after imposing additional conditions. This places the surface
theory closer in spirit to cuspidal deformation theory for curves than to nodal
Severi theory.

In this higher--dimensional setting, logarithmic geometry again provides the natural
framework for degenerations. If $\pi:\mathcal X\to\Delta$ is logarithmically smooth
and $\mathcal X_0$ has normal crossings, then logarithmic embedded deformations of $S$
are governed by the logarithmic normal bundle $N^{\log}_{S/\mathcal X_0}$. A notion of
logarithmic semiregularity ensures unobstructedness and allows one to lift local
smoothing directions of ordinary double points to global deformations of $S$ in
$\mathcal X$.

In summary, replacing curves by surfaces and surfaces by threefolds shifts the theory
from equigeneric nodality to equisingular nodality. Ordinary double points replace
nodes, numerical invariants become more subtle, and genericity arguments give way to
rigidity phenomena. Nevertheless, the underlying philosophy remains the same: isolated
mild singularities arise as boundary points of deformation spaces, and maximal nodal
behavior is governed by the interaction between local smoothing theory and global
normal bundle geometry.

%%%%%%%%%%%%%%%%%%%%%%%%%%%%%%%%%%%%%%%%%%%%%%%%%%%%%%%%%%%%%%%%%%%%
\vspace{0.1cm}

In the case where the special fiber $\mathcal X_0$ is a threefold and the object of
interest is a surface $S\subset\mathcal X_0$, one cannot expect a direct analogue of
the classical Severi theorem for curves. The appropriate result is necessarily
equisingular rather than equigeneric and concerns the appearance of ordinary double
points on surfaces.

The natural analogue of a nodal curve is a surface with isolated ordinary double point
singularities. These are analytically given by
\[
x^2+y^2+z^2=0
\]
in a smooth threefold and are the mildest isolated singularities of surfaces. Unlike
nodes on curves, ordinary double points on surfaces are not generic: they appear only
after imposing several independent conditions. Consequently, the maximality statement
must be phrased in terms of realizable local smoothing directions rather than purely
numerical invariants.

 \medskip
 
%%%%%%%%%%%%%%%%%%%%%%%%%%%%%%%%%%%%%%%%%%%%%%%%%%%%%%%%%%%%%%%%

\begin{theorem}[Nodal surface deformations in a threefold]
Let $\pi:\mathcal X\to\Delta$ be a flat family whose general fiber $\mathcal X_t$ is a
smooth threefold. Let $S\subset\mathcal X_0$ be a reduced local complete intersection
surface with isolated singularities. Assume that embedded deformations of $S$ inside
$\mathcal X_0$ are unobstructed and that the natural map
\[
H^0(S,N_{S/\mathcal X_0})\longrightarrow
\bigoplus_{p\in\mathrm{Sing}(S)} T^{1,\mathrm{es}}_{S,p}
\]
is surjective, where $T^{1,\mathrm{es}}_{S,p}$ denotes the equisingular deformation
space of an ordinary double point at $p$. Then there exists a deformation
$S_t\subset\mathcal X_t$ for $t\neq0$ such that $S_t$ has only ordinary double points,
and the number of such points equals the dimension of the image of the above map.
Moreover, this number is maximal among all nodal surface deformations of $S$ inside the
family.
\end{theorem}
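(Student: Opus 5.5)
The plan is to follow the proof of the cuspidal theorem for curves in a linear system step by step, replacing the cusp germ $y^2=x^3$ by the ordinary double point $x^2+y^2+z^2=0$ and the normal sheaf of a curve in a surface by $N_{S/\mathcal X_0}$.

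The first step is local. At each $p\in\mathrm{Sing}(S)$ I would write down the semiuniversal deformation of the germ. For an ordinary double point it is $x^2+y^2+z^2=\varepsilon$, so $T^1_{S,p}$ is one--dimensional. By the Preliminaries this deformation has no moduli, so $T^{1,\mathrm{es}}_{S,p}$ is identified with the directions preserving the $A_1$ type at $p$. Concretely, these are the directions that move the singular point while keeping the local Hessian nondegenerate, together with the trivial direction.

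The second step is global, at first order. By surjectivity of $H^0(S,N_{S/\mathcal X_0})\to\bigoplus_p T^{1,\mathrm{es}}_{S,p}$, I would choose a basis of the image and, for each prescribed tuple of local equisingular directions, a global section $\xi$ realizing it. By the unobstructedness hypothesis, $\xi$ integrates to an actual family $S_s\subset\mathcal X_0$. Openness of the $A_1$ condition, namely nondegeneracy of the Hessian, then shows that the ordinary double points persist along $S_s$.

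The third step passes from $\mathcal X_0$ to $\mathcal X_t$. I would use the exact sequence
\[
0\to N_{S/\mathcal X_0}\to N_{S/\mathcal X}\to\mathcal O_S\to0
\]
to lift $S_s$ to a two--parameter family in $\mathcal X$, and then restrict to a one--parameter slice transverse to $\mathcal X_0$. This produces $S_t\subset\mathcal X_t$ that is equisingular at the chosen points.

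This third step is where I expect the main obstacle. The hypotheses only control obstructions in $H^1(S,N_{S/\mathcal X_0})$, whereas lifting into the total space requires the connecting map $H^0(S,\mathcal O_S)\to H^1(S,N_{S/\mathcal X_0})$ to kill the relevant class. In the logarithmic version of the cuspidal theorem this was handled by semiregularity and the vanishing of the Kodaira--Spencer image. Here I would first try to argue, as in the proof of the cuspidal theorem, that unobstructedness of $S$ in $\mathcal X_0$ together with flatness of $\pi$ lets the equisingular first--order data extend. Failing that, I would impose $H^1(S,N_{S/\mathcal X})=0$ or semiregularity as an implicit strengthening. A further difficulty is that $\mathcal X_0$ may be singular along $S$; I would assume that $\mathcal X_0$ is smooth near $\mathrm{Sing}(S)$, so that the local model makes sense.

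The fourth step excludes extra singularities. Because the global family is chosen generically inside the equisingular locus, new singular points on $S_t$ away from $\mathrm{Sing}(S)$ lie in a closed subset of positive codimension: an ordinary double point imposes three conditions. Singularities worse than $A_1$ near $\mathrm{Sing}(S)$ are excluded by Hessian nondegeneracy.

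The final step is counting and maximality. The count follows by identifying the surviving ordinary double points with the independent equisingular directions realized globally, which gives the dimension of the image. Maximality follows because any nodal deformation of $S$ in the family determines, at each of its nodes, an equisingular direction. The tuple of these directions must lie in the image of the global map, so no deformation can carry more nodes than the dimension of that image.
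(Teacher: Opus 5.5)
Your outline is the paper's outline: local model, global section via surjectivity, integration, lift through $0\to N_{S/\mathcal X_0}\to N_{S/\mathcal X}\to\mathcal O_S\to0$, genericity, count. However, the decisive gap is not Step~3. If unobstructedness means $H^1(S,N_{S/\mathcal X_0})=0$, as in the Preliminaries, then the connecting map $H^0(\mathcal O_S)\to H^1(S,N_{S/\mathcal X_0})$ is zero and $\mathrm{Hilb}(\mathcal X/\Delta)$ is smooth over $\Delta$ at $[S]$, so $S$ does lift. Your worry is legitimate only under the weaker reading, and even then the real issue is that an arbitrary lift need not keep any node.

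The genuine gap is in Steps~1, 2 and~5. For $f=x^2+y^2+z^2$ one has $T^1_{S,p}=\mathcal O_{\mathcal X_0,p}/(f,\partial_xf,\partial_yf,\partial_zf)\cong k$, spanned by the class of $1$, i.e.\ the smoothing $f=\varepsilon$. The directions you call equisingular (moving the singular point) lie in the Jacobian ideal and are \emph{zero} in $T^1_{S,p}$. So with your identification every $T^{1,\mathrm{es}}_{S,p}=0$, the image is $0$, and your count predicts no nodes, whereas a deformation equisingular at every $p$ keeps all $\#\mathrm{Sing}(S)$ of them. With the paper's convention $T^{1,\mathrm{es}}_{S,p}=T^1_{S,p}$, a section with nonzero component at $p$ \emph{smooths} $p$, so nodes can survive only where the component vanishes, which is the opposite of your Step~5. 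Step~2 has a related error. Openness of the $A_1$ condition only says that a singular point near $p$, if present, is an ordinary double point. Having a singular point near $p$ is a \emph{closed} condition (the discriminant), so integrating a $\xi$ whose first-order component at $p$ vanishes can still smooth $p$ at higher order. Finally, the maximality argument is a non sequitur. The zero tuple lies in every image yet corresponds to deformations keeping all nodes, and under surjectivity ``lying in the image'' is no condition at all.

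The missing idea is to feed surjectivity into the equisingular normal sheaf rather than into a choice of nonzero directions. Assume, as you propose, that $\mathcal X_0$ is smooth along $S$, and let $N'_S=\mathrm{Im}(T_{\mathcal X_0}|_S\to N_{S/\mathcal X_0})$, locally $J_f\cdot N_{S/\mathcal X_0}$. Then $0\to N'_S\to N_{S/\mathcal X_0}\to\bigoplus_pT^1_{S,p}\to0$ is exact, so surjectivity together with $H^1(S,N_{S/\mathcal X_0})=0$ gives $H^1(S,N'_S)=0$. This is the obstruction space for locally trivial embedded deformations, also relative to $\Delta$ when $\pi$ is smooth along $S$. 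These deformations are therefore smooth over $\Delta$ and yield $S_t\subset\mathcal X_t$ keeping every node. Under the convention $\dim T^{1,\mathrm{es}}_{S,p}=1$, surjectivity gives exactly $\#\mathrm{Sing}(S)=\dim\mathrm{Im}$.

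Maximality comes from semicontinuity, not from the image. Since $S$ is proper and the relative singular locus is closed, singular points of $S_t$ for small $t$ lie near $\mathrm{Sing}(S)$. Near an ordinary double point ($\mu=1$), a small deformation has at most one singular point. This also replaces your Step~4, whose count is off: an ordinary double point at a moving point is a codimension-one condition, not three. The paper's proof makes the same moves as yours at the decisive point, prescribing nonzero directions at $p_1,\dots,p_k$ and asserting that nodes appear there, so following it does not close the gap.
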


\begin{proof}
Since $S$ is a reduced local complete intersection surface in the threefold
$\mathcal X_0$, its embedded deformation theory is governed by the normal bundle
$N_{S/\mathcal X_0}$, which is a line bundle on $S$. Infinitesimal embedded deformations
of $S$ inside $\mathcal X_0$ are parametrized by the vector space
$H^0(S,N_{S/\mathcal X_0})$, and obstructions to extending such deformations lie in
$H^1(S,N_{S/\mathcal X_0})$. By hypothesis, all embedded deformations of $S$ inside
$\mathcal X_0$ are unobstructed, so every first--order deformation integrates to an
actual deformation of $S$ in $\mathcal X_0$.

Let $p\in\mathrm{Sing}(S)$ be an isolated ordinary double point. Analytically, the germ
$(S,p)$ is isomorphic to the hypersurface
\[
x^2+y^2+z^2=0
\]
inside a smooth threefold. The local deformation space $T^1_{S,p}$ is one--dimensional
and parametrizes smoothings of the ordinary double point. Since the singularity has no
moduli, every nonzero first--order deformation preserves the analytic type, and hence
the equisingular deformation space coincides with $T^1_{S,p}$ and is one--dimensional.
The natural restriction map
\[
\varphi:
H^0(S,N_{S/\mathcal X_0})\longrightarrow
\bigoplus_{p\in\mathrm{Sing}(S)} T^{1,\mathrm{es}}_{S,p}
\]
associates to a global infinitesimal embedded deformation of $S$ the collection of its
local equisingular deformation parameters at the singular points. By surjectivity,
every choice of local equisingular deformation directions at the singular points arises
from a global infinitesimal deformation of $S$ inside $\mathcal X_0$.

Let
\(
k=\dim\mathrm{Im}(\varphi).
\)
Choose a basis of $\mathrm{Im}(\varphi)$, which corresponds to $k$ singular points
$p_1,\dots,p_k$ together with independent local equisingular deformation directions at
these points. Let $\xi\in H^0(S,N_{S/\mathcal X_0})$ be a global infinitesimal
deformation mapping to this chosen collection. Since obstructions vanish, $\xi$
integrates to an actual deformation $S_s\subset\mathcal X_0$ over a small disk with
parameter $s$. For $s\neq0$, the surface $S_s$ has ordinary double points precisely at
the points $p_1,\dots,p_k$, and no worse singularities appear near these points.

We now lift this deformation to the total space $\mathcal X$. Because $\mathcal X_0$ is
a fiber of the flat morphism $\pi$, there is a canonical exact sequence
\[
0\longrightarrow N_{S/\mathcal X_0}
\longrightarrow N_{S/\mathcal X}
\longrightarrow \mathcal O_S
\longrightarrow 0.
\]
The unobstructedness of deformations of $S$ in $\mathcal X_0$ implies that infinitesimal
embedded deformations lift to infinitesimal embedded deformations in the total space
$\mathcal X$. Consequently, the family $S_s\subset\mathcal X_0$ extends to a
deformation inside $\mathcal X$, and by restricting to a one--parameter slice
transverse to $\mathcal X_0$ one obtains a family
\[
S_t\subset\mathcal X_t,\qquad t\neq0,
\]
whose special fiber is $S$.

Ordinary double points are stable singularities, so nodality is an open condition in
flat families. Hence, for $t\neq0$ sufficiently small, the surface $S_t$ has ordinary
double points exactly at the points corresponding to the chosen local deformation
directions. Singularities worse than ordinary double points impose additional
independent conditions on the deformation space and therefore occur only in proper
closed subsets. By choosing the deformation generically, these subsets are avoided, and
no additional singularities appear on $S_t$.

The number of ordinary double points on $S_t$ is therefore equal to $k$, the dimension
of the image of the map $\varphi$. This number is maximal. Indeed, any nodal deformation
of $S$ inside the family determines, at each ordinary double point, a local equisingular
deformation direction and hence an element of the target of $\varphi$. Only those
collections of local directions lying in the image of $\varphi$ can be realized
simultaneously by a global embedded deformation. Thus no deformation of $S$ inside the
family can produce more than $k$ ordinary double points.

This proves the existence and maximality of nodal surface deformations of $S$ inside
the family and completes the proof.
\end{proof} %\hfill$\square$

%%%%%%%%%%%%%%%%%%%%%%%%%%%%%%%%%%%%%%%%%%%%%%%%%%%%%%%%%%%%%%%%
The content of this theorem is that local smoothing directions of ordinary double
points can be simultaneously realized globally when obstructions vanish and when the
normal bundle has enough global sections. The maximal number of nodes is determined by
the dimension of the space of equisingular deformation directions that can be imposed
globally, rather than by a simple numerical invariant as in the curve case.

In logarithmic degenerations of threefolds, the same statement admits a logarithmic
refinement. The normal bundle is replaced by the logarithmic normal bundle, and
unobstructedness is replaced by logarithmic semiregularity. Under these assumptions,
one expects generic logarithmic deformations to produce surfaces with the maximal
number of ordinary double points compatible with the deformation theory.

Conceptually, this theorem places nodal surface theory in the same hierarchy as
cuspidal curve theory. Ordinary double points are boundary phenomena in deformation
spaces of surfaces, arising as limits of smoother behavior, and their maximal
realization is governed by rigidity rather than genericity.

%%%%%%%%%%%%%%%%%%%%%%%%%%%%%%%%%%%%%%%%%%%%%%%%%%%%%%%%%  

\section{Nodal curve deformations in a threefold}
 
When the special fiber $\mathcal X_0$ is a threefold and the object of interest is a
curve $C\subset\mathcal X_0$, the deformation problem lies between the classical theory
of curves on surfaces and the theory of surfaces in threefolds. The nature of the
singularities, the deformation spaces, and the notion of maximality all change in a
subtle but systematic way.

Let $\pi:\mathcal X\to\Delta$ be a flat family whose general fiber $\mathcal X_t$ is a
smooth threefold, and let $C\subset\mathcal X_0$ be a reduced curve that is a local
complete intersection in $\mathcal X_0$. The natural analogue of a nodal curve is again
a curve with ordinary nodes, analytically given by $xy=0$ inside a smooth surface
slice of the threefold. Nodes remain the simplest isolated curve singularities and are
stable under deformation.

Embedded deformations of $C$ inside $\mathcal X_0$ are governed by the normal bundle
$N_{C/\mathcal X_0}$, which is now a rank--two vector bundle on $C$. Infinitesimal
embedded deformations are parametrized by $H^0(C,N_{C/\mathcal X_0})$, and obstructions
lie in $H^1(C,N_{C/\mathcal X_0})$. Compared with the surface case, the increased rank
of the normal bundle gives additional deformation directions, but also potentially
additional obstructions.

The numerical invariant controlling nodal behavior is still the total
$\delta$--invariant of the curve,
\[
\delta(C)=\sum_{p\in\mathrm{Sing}(C)}\delta_p,
\]
since the arithmetic genus of $C$ is preserved in flat families. Each node contributes
$\delta_p=1$, so any nodal deformation $C_t\subset\mathcal X_t$ satisfies
\[
\#\{\text{nodes of }C_t\}\le \delta(C).
\]
However, unlike the surface case, nodes on curves in threefolds are not generic
phenomena: a generic deformation of $C$ inside $\mathcal X_0$ or $\mathcal X_t$ is
smooth, and nodes appear only after imposing conditions.

The correct analogue of the nodal deformation theorem is therefore equisingular rather
than equigeneric. One seeks conditions ensuring that prescribed nodal singularities can
be preserved under deformation, and that the maximal number of nodes is realized among
all nodal deformations.

The expected statement is the following. Assume that $H^1(C,N_{C/\mathcal X_0})=0$ and
that the natural map
\[
H^0(C,N_{C/\mathcal X_0})\longrightarrow
\bigoplus_{p\in\mathrm{Sing}(C)} T^{1,\mathrm{es}}_{C,p}
\]
is surjective, where $T^{1,\mathrm{es}}_{C,p}$ is the equisingular deformation space of
a node at $p$, which is one--dimensional. Then there exists a deformation
$C_t\subset\mathcal X_t$ for $t\neq0$ such that $C_t$ has only nodes as singularities,
and the number of nodes equals the dimension of the image of this map. Moreover, this
number is maximal among all nodal deformations of $C$ inside the family.

Conceptually, this places nodal curves in threefolds in the same framework as cuspidal
curves on surfaces. Nodes are no longer generic features of equigeneric deformation
spaces, but rigid singularities that require equisingular conditions to persist. The
maximal number of nodes is governed by the dimension of the space of equisingular
deformation directions realizable globally, rather than purely by the arithmetic genus.

In degenerations of threefolds with normal crossings special fibers, logarithmic
geometry again provides the natural refinement. Logarithmic embedded deformations of
$C$ are governed by the logarithmic normal bundle $N^{\log}_{C/\mathcal X_0}$, and
logarithmic semiregularity ensures unobstructedness. Under these assumptions, one
expects generic logarithmic equisingular deformations to produce nodal curves on the
smooth fibers with the maximal possible number of nodes.

Thus, when curves lie on threefolds, nodal behavior becomes a boundary phenomenon
rather than a generic one, and the theory aligns naturally with the equisingular and
logarithmic perspective developed for cuspidal curves on surfaces.

%%%%%%%%%%%%%%%%%%%%%%%%%%%%%%%%%%%%%%%%%%%%%%%%%%%%%%%%%%%%%%%%%%%
%\vspace{0.1cm}

In the situation where the special fiber $\mathcal X_0$ is a threefold and the object
of interest is a curve $C\subset\mathcal X_0$, the correct analogue of the classical
Severi--type results is an equisingular deformation theorem for nodal curves. In this
context, nodes are no longer generic features of equigeneric families but rather
codimension--two phenomena that persist only under additional constraints. As a
result, maximality statements must be formulated in terms of equisingular deformation
theory.

\begin{theorem}[Nodal curve deformations in a threefold]
Let $\pi:\mathcal X\to\Delta$ be a flat family whose general fiber $\mathcal X_t$ is a
smooth threefold. Let $C\subset\mathcal X_0$ be a reduced local complete intersection
curve with isolated singularities. Assume that embedded deformations of $C$ inside
$\mathcal X_0$ are unobstructed and that the natural map
\[
H^0(C,N_{C/\mathcal X_0})\longrightarrow
\bigoplus_{p\in\mathrm{Sing}(C)} T^{1,\mathrm{es}}_{C,p}
\]
is surjective, where $T^{1,\mathrm{es}}_{C,p}$ denotes the equisingular deformation
space of a node at $p$. Then there exists a deformation $C_t\subset\mathcal X_t$ for
$t\neq0$ such that $C_t$ has only ordinary nodes, and the number of nodes equals the
dimension of the image of the above map. Moreover, this number is maximal among all
nodal deformations of $C$ inside the family.
\end{theorem}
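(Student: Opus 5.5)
The plan is to follow the same scheme as the proofs of the cuspidal theorem and of the nodal surface theorem in a threefold, adapting each step to a curve inside a threefold. \emph{Step~1.} First fix the local picture. Since $C$ is a reduced local complete intersection curve in $\mathcal X_0$, the normal sheaf $N_{C/\mathcal X_0}$ has rank two. First-order embedded deformations are given by $H^0(C,N_{C/\mathcal X_0})$ and obstructions lie in $H^1(C,N_{C/\mathcal X_0})$. By the unobstructedness hypothesis, every first-order deformation integrates over a small disk.

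\emph{Step~2.} Next describe the node at $p$ as a space curve germ, namely $\{xy=0,\ z=0\}$ in $k^3$. Its deformation space $T^1_{C,p}$ is one-dimensional, spanned by the smoothing $xy=\epsilon$. The equisingular subspace $T^{1,\mathrm{es}}_{C,p}$ is the line of directions that keep a node near $p$, for instance directions moving the node or keeping the two branches meeting. Note that for space curves, a generic embedded deformation makes the two branches disjoint. So keeping a node is a codimension-one condition on the normal bundle directions at $p$: the component of the section along the ``separating'' direction must vanish. Identifying $T^{1,\mathrm{es}}_{C,p}$ with this quotient datum is what makes the restriction map $\varphi$ well defined.

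\emph{Step~3.} Set $k=\dim\mathrm{Im}(\varphi)$. Use surjectivity of $\varphi$ to choose $\xi\in H^0(C,N_{C/\mathcal X_0})$ whose local component at each node is the prescribed equisingular datum: nonzero at a chosen subset and zero elsewhere, as in the cuspidal proof. Integrate $\xi$ to a family $C_s\subset\mathcal X_0$. Then use the exact sequence
\[
0\to N_{C/\mathcal X_0}\to N_{C/\mathcal X}\to\mathcal O_C\to 0
\]
to lift to $\mathcal X$, and slice transversally to $\mathcal X_0$ to obtain $C_t\subset\mathcal X_t$. Stability of nodes (nodality is open within the equisingular stratum) shows that $C_t$ has nodes at the $k$ realized points. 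Genericity within the equisingular locus excludes worse singularities, since these lie in proper closed subsets.

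\emph{Step~4.} Maximality. Any nodal deformation induces at each node an element of $\bigoplus_p T^{1,\mathrm{es}}_{C,p}$ lying in $\mathrm{Im}(\varphi)$, so the number of simultaneously realized nodes is at most $k$.

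\emph{Main obstacle.} I expect the hard step to be the lifting from $\mathcal X_0$ to $\mathcal X$ in Step~3. Unobstructedness inside $\mathcal X_0$ does not by itself split the sequence above. One needs the image of $1\in H^0(\mathcal O_C)$ in $H^1(C,N_{C/\mathcal X_0})$ to vanish, and this image is the restriction of the Kodaira--Spencer class. My first attempt would be to use the hypothesis $H^1(C,N_{C/\mathcal X_0})=0$ from the preceding discussion, so that this class vanishes automatically and the obstruction to extending $C$ over $\Delta$ lies in a zero group. Failing that, I would add a semiregularity-type assumption, as in the logarithmic cuspidal theorem.

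A secondary subtlety is counting. Preserving a node should genuinely correspond to one independent linear condition. Then ``number of nodes $=\dim\mathrm{Im}(\varphi)$'' should be read as counting nodes whose equisingular directions are independently realizable, exactly as in the nodal surface theorem.
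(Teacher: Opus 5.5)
Your outline follows the paper's proof almost step for step: integrate a section $\xi$ with prescribed local components, lift through $0\to N_{C/\mathcal X_0}\to N_{C/\mathcal X}\to\mathcal O_C\to 0$, slice, invoke openness, and bound by $\mathrm{Im}\,\varphi$. Your ``main obstacle'' is well spotted. The lift needs the connecting map $H^0(\mathcal O_C)\to H^1(C,N_{C/\mathcal X_0})$ to kill $1$. With the Preliminaries' convention that unobstructed means $H^1(C,N_{C/\mathcal X_0})=0$, the relative Hilbert scheme $H=\mathrm{Hilb}(\mathcal X/\Delta)$ is smooth over $\Delta$ at $[C]$, with relative tangent space $H^0(C,N_{C/\mathcal X_0})$.

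The genuine gap is in Step~2, and it breaks Step~3. A flat deformation of $C$ cannot pull the branches of a node apart. For example, $\{x=z=0\}\cup\{y=0,\ z=\epsilon\}$ has flat limit $(xy,xz,yz,z^2)$, i.e.\ $C$ with an embedded point (two skew lines have $p_a=-1$, two meeting lines $p_a=0$). Already at first order, a local section of $N_{C/\mathcal X_0}\cong\mathcal O_C^{2}$ is a pair of functions on $C$, single-valued at $p$, so it cannot displace one branch relative to the other. Modulo trivial deformations, the only local datum of $\xi$ at $p$ is its image in $T^1_{C,p}\cong k$, and that line is the smoothing $xy=\epsilon$. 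The node survives exactly when this component is \emph{zero}, so inside $T^1_{C,p}$ the equisingular part is $0$. (The paper makes the mirror-image error, asserting $T^{1,\mathrm{es}}_{C,p}=T^1_{C,p}$ with every nonzero direction preserving the node.) Your instinct that the right target is the quotient of local normal directions by the node-preserving ones is correct; that quotient is $T^1_{C,p}$. But then, even on your own reading, a nonzero component at $p$ is precisely what destroys the node. So Step~3, which prescribes nonzero data on a chosen subset and claims nodes there, contradicts Step~2.

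What is missing is the local-to-global argument in which surjectivity is really used. Integrating a single $\xi$ with zero local components gives equisingularity only to first order, and lifting and slicing give no control over the nodes on $\mathcal X_t$. Take the hypothesis as surjectivity of $\varphi\colon H^0(C,N_{C/\mathcal X_0})\to\bigoplus_pT^1_{C,p}$, which is what the paper's identification amounts to. By versality, the universal family over $H$ induces near each $p$ a map to the base $B_p$ of the semiuniversal deformation $xy=s$ of the node. This is a smooth one-dimensional germ whose only singular fibre lies over $s=0$. The differential of $H\to\Delta\times\prod_pB_p$ at $[C]$ is onto, because $H\to\Delta$ is smooth with relative tangent space $H^0(C,N_{C/\mathcal X_0})$ and $\varphi$ is onto. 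Hence this morphism is smooth at $[C]$, and the preimage of $\Delta\times\{0\}$ is smooth over $\Delta$. A local section of it over $\Delta$ is the transverse slice you want: a family $C_t\subset\mathcal X_t$ with exactly one node near each $p$. Preimages of the other coordinate strata smooth any prescribed subset of nodes independently.

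The same versality shows that no genericity is needed to exclude worse singularities. Every deformation of a node is pulled back from $xy=s$, and nearby fibres are smooth away from $\mathrm{Sing}(C)$.

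Finally, your maximality step is vacuous as written. With $\varphi$ surjective, saying that the vector induced by a deformation lies in $\mathrm{Im}\,\varphi$ constrains nothing, and a single vector bounds no count. The bound $\#\{\text{nodes of }C_t\}\le\#\mathrm{Sing}(C)=\dim\mathrm{Im}\,\varphi$ comes instead from semicontinuity: singular points of $C_t$ specialize into $\mathrm{Sing}(C)$, with at most one near each node. All of this presupposes that every singular point of $C$ is a node, which the statement leaves implicit.
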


%%%%%%%%%%%%%%%%%%%%%%%%%%%%%%%%%%%%%%%

\begin{proof}
Since $C$ is a reduced local complete intersection curve in the threefold
$\mathcal X_0$, its embedded deformation theory is governed by the normal bundle
$N_{C/\mathcal X_0}$, which is a rank--two vector bundle on $C$. Infinitesimal embedded
deformations of $C$ inside $\mathcal X_0$ are parametrized by
$H^0(C,N_{C/\mathcal X_0})$, and the obstruction space is
$H^1(C,N_{C/\mathcal X_0})$. By assumption, embedded deformations are unobstructed, so
every first--order deformation integrates to an actual deformation of $C$ inside
$\mathcal X_0$.

Let $p\in\mathrm{Sing}(C)$ be a node. Analytically, the germ $(C,p)$ is isomorphic to
$\{xy=0\}$ inside a smooth surface slice of $\mathcal X_0$. The local deformation space
$T^1_{C,p}$ is one--dimensional and parametrizes smoothings of the node. Since a node
has no moduli, every nonzero deformation direction preserves the analytic type of the
singularity; hence the equisingular deformation space coincides with $T^1_{C,p}$ and
is one--dimensional.

The natural restriction map
\[
H^0(C,N_{C/\mathcal X_0})\longrightarrow
\bigoplus_{p\in\mathrm{Sing}(C)} T^{1,\mathrm{es}}_{C,p}
\]
associates to a global first--order embedded deformation of $C$ its collection of local
deformation directions at the singular points. Surjectivity means that for any choice
of local nodal deformation parameters at the singular points, there exists a global
first--order embedded deformation inducing exactly those local parameters.

Choose a basis of the image of this map. This determines a collection of singular
points $p_1,\dots,p_k$ and independent local nodal deformation directions at these
points, where
\[
k=\dim \mathrm{Im}\!\left(
H^0(C,N_{C/\mathcal X_0})\to
\bigoplus_{p\in\mathrm{Sing}(C)} T^{1,\mathrm{es}}_{C,p}
\right).
\]
Let $\xi\in H^0(C,N_{C/\mathcal X_0})$ be a global first--order deformation mapping to
this collection of local directions. Since obstructions vanish, $\xi$ integrates to an
actual deformation $C_s\subset\mathcal X_0$ over a small disk with parameter $s$. For
$s\neq0$, the curve $C_s$ has ordinary nodes precisely at the points $p_1,\dots,p_k$,
and no worse singularities occur near these points.

We now lift this deformation to the total space $\mathcal X$. Because $\mathcal X_0$
is a fiber of the flat morphism $\pi$, there is a canonical exact sequence
\[
0\longrightarrow N_{C/\mathcal X_0}
\longrightarrow N_{C/\mathcal X}
\longrightarrow \mathcal O_C
\longrightarrow 0.
\]
Unobstructedness of deformations in $\mathcal X_0$ implies that infinitesimal embedded
deformations lift to infinitesimal deformations in the total space. Consequently, the
family $C_s\subset\mathcal X_0$ extends to a deformation inside $\mathcal X$, and by
restricting to a one--parameter slice transverse to $\mathcal X_0$ one obtains a
family
\[
C_t\subset\mathcal X_t,\qquad t\neq0,
\]
whose special fiber is $C$.

Since nodes are stable singularities, nodality is an open condition in families. Thus,
for $t\neq0$ sufficiently small, the curve $C_t$ has ordinary nodes exactly at the
points corresponding to the chosen local deformation directions. No additional
singularities can appear, because singularities worse than nodes impose extra
independent conditions and hence occur only in proper closed subsets of the deformation
space. Choosing the deformation generically avoids these subsets.

The number of nodes on $C_t$ is therefore equal to $k$, the dimension of the image of
the global--to--local map. This number is maximal. Indeed, any nodal deformation of $C$
inside the family determines, at each node, a local equisingular deformation direction,
hence an element of the target space. Only those collections of local directions lying
in the image of the global map can be realized simultaneously by a global deformation.
No deformation can therefore produce more than $k$ nodes.

This completes the proof.
\end{proof}%\hfill$\square$

%%%%%%%%%%%%%%%%%%%%%%%%%%%%%%%%%%%%%%%%

The meaning of this statement is that local equisingular deformation directions
corresponding to nodes can be imposed independently as long as the normal bundle has
sufficiently many global sections and no obstructions arise. The maximal number of
nodes is therefore determined by the dimension of the space of equisingular directions
that can be realized globally, rather than by the arithmetic genus alone.

In the presence of a degeneration of threefolds with normal crossings special fiber,
this theorem admits a logarithmic refinement. The normal bundle is replaced by the
logarithmic normal bundle, unobstructedness by logarithmic semiregularity, and the same
conclusion holds for logarithmic equisingular deformations. This aligns the theory of
curves in threefolds with the general deformation--theoretic framework developed for
cuspidal curves on surfaces and nodal surfaces in threefolds.

Conceptually, this result emphasizes that nodal curves in threefolds occupy a rigid
boundary position in deformation spaces. Their behavior is controlled by equisingular
rather than equigeneric conditions, and maximal nodality reflects the global geometry
of the normal bundle rather than purely numerical invariants.

%%%%%%%%%%%%%%%%%%%%%%%%%%%%%%%%%%%%%%%%%%%%%%%%%%%%%%%%%%%%%%%%%%%%

 \bigskip

The following references provide background, tools, and closely related results to the
themes developed in this work, including deformation theory of singularities, Severi
theory, logarithmic geometry, and tropical methods.

\end{document}